\newtheorem{remark}{Remark}
\DeclareMathOperator*{\argmin}{\arg\,\min}
\title{ Trace ratio based manifold learning with tensor data }
\author{M. Bouallala \footnotemark[3] \thanks{ LMPA, 50 rue 
		F. Buisson, Universit\'e du Littoral Cote d'Opale, 62228 Calais-Cedex, France.}
	\and
	F. Dufrenois\thanks{ LISIC, 50 rue 
		F. Buisson, Universit\'e du Littoral Cote d'Opale, 62228 Calais-Cedex, France.}
	\and 
	K. Jbilou\footnotemark[1] \thanks{The Vanguard Center, Mohammed VI Polytechnic University, Green 
		City, Benguérir Morocco.}
	\and
	A. Ratnani\footnotemark[3]
}
\begin{document}
	\maketitle 
	
	\begin{abstract}
	In this paper, we propose an extension of trace ratio based Manifold learning methods to deal with multidimensional data sets. Based on recent progress on the  tensor-tensor product, we present a generalization of the trace ratio criterion by using the properties of the t-product. This will conduct us to introduce some new concepts such as Laplacian tensor and we will study formally the trace ratio problem by discuting the conditions for the exitence of solutions and optimality. Next, we will present a tensor Newton QR decomposition algorithm for solving the trace ratio problem. Manifold learning  methods such as Laplacian eigenmaps, linear discriminant analysis and locally linear embedding will be formulated in a tensor representation and optimized by the proposed algorithm. Lastly, we will evaluate the performance of the different studied dimension reduction methods on several synthetic and real world data sets.
	\end{abstract}
	
	\begin{keywords}
		Dimensionality Reduction, Multilinear Algebra,  Trace-Ratio,  Tensor Methods, t-product.
	\end{keywords}
		\section{Introduction} 
	 In the big data era, machine learning methods (ML) are faced with an increasing volume of data which can both mix different modalities and contain several thousands, or millions of features. This unstoppable curse of dimensionality is the « Achilles heel » of most ML methods which involves an increase of the complexity of the model and a loss of generalization capacity. Dimensionality reduction (DR) or more generally manifold learning is a tailored response to minimize this problem and open up the access to modern real world applications such as multiview classification \cite{zhang2016flexible}, object detection \cite{kokiopoulou2011trace, zhou2014dimension},… The principle of DR consists in projecting high-dimensional data into a lower-dimensional space dimensional while retaining as much of the important information as possible. DR includes a wide variety of methods, from the most classical and popular such as principal component Analysis (PCA,\cite{anowar2021conceptual, kokiopoulou2011trace}), Linear discriminant analysis (LDA,\cite{anowar2021conceptual, kokiopoulou2011trace}), singular value decomposition (SVD,\cite{anowar2021conceptual}),... to the most recent Self Organizing Map (SOM,\cite{kohonen2013essentials}), ISOMAP \cite{anowar2021conceptual, kokiopoulou2011trace}, Locally Linear Embedding (LLE,\cite{kokiopoulou2011trace,ghojogh2020locally,saul2000introduction}), Laplacian Eigenmaps (LE,\cite{carreira2007laplacian, kokiopoulou2011trace}),… to name but a few (for a review see \cite{anowar2021conceptual,kokiopoulou2011trace}).\\ 
		A wide majority of these dimensionality reduction methods are formulated under the form of a ratio trace problem whose the optimization amounts to solve a generalized eigenproblem. Although all these methods have been developed in a matricial form (second-order tensor), they are unsuited and loss their efficiency for large multidimensional data sets. Representing data and formulating an optimization problem with tensors of order greater than 2 become a new challenging task for modern ML methods. \\
		Recently, Principal Component Analysis (PCA) and Linear Discriminant Analysis (LDA) have been generalized to deal with multidimensional data sets. Firstly, by using the n-mode product of tensors, numerous optimizations procedures have been proposed to solve PCA and LDA in this context \cite{lu2008uncorrelated,lu2008mpca,yan2006multilinear}. However, these approaches are not « fully » in a tensor form since the underlying optimization process amounts to find projectives matrices instead of projectives tensors.  Recently, based on recent developments on tensor-tensor products \cite{kilmer2011factorization}, this question have been solved. For instance, Sparse Regularization Tensor Robust Principal Component Analysis \cite{yang2020sparse}, and Multilinear Discriminant Analysis (MLDA,\cite{dufrenois2022multilinear}) propose to solve the DR problem by using the properties of the t-product. The particularity of this t-product is to realize the optimization of the trace ratio problem associated to PCA and LDA in a so called « transform domain » where the tensor-tensor product can be defined. The transformation being invertible, a projective tensor, solution of the problem, is next recovered. \\
		Based on these recent developments, we propose here to generalize several Manifold learning methods formulated as a trace ratio criterion. In particular, our study will focus on the generalization of the following approaches: Local Discriminant Embedding (LDE) \cite{chen2005local,zhou2014dimension}, Laplacian Eigenmaps (LE) \cite{belkin2001laplacian,carreira2007laplacian, kokiopoulou2011trace}, and Locally Linear Embedding (LLE) \cite{kokiopoulou2011trace,ghojogh2020locally, saul2000introduction}. These methods are central statisticals tools in the ML toolbox and have demonstrated they superiority over conventional methods such as PCA and LDA \cite{kilmer2011factorization}.	The particularity of these methods is to be based on three key steps:  a) building a neighborhood graph,b) computing a weighting vector and c) computing the embedding. Below, let us recall briefly the principle of these methods.\\
		- LE is an unsupervised manifold learning method which relies on the construction of a graph from neighborhood information of the data set and the minimization of given cost function based on this graph. This method ensures that points close to each other on the manifold are mapped close to each other in the low-dimensional space, thus preserving local distances. \\
		- LLE is also an unsupervised dimensionality reduction method which tries as LE to preserve the local structure of data in the embedding space. The principle of LLE is fitting the local structure of manifold in the embedding space. The local structure of the data is obtained by building a k-NN graph. \\
		- LDE is a supervised manifold learning algorithm which makes use of both the neighborhood relationships between data points and the class label information to obtain a lower-dimensional embedding. Unlike LDA and related methods, the discrimination ability of LDE does not strongly depend on the data distribution, such as the Gaussian assumption. Moreover, unlike many manifold learning methods such as Isomap \cite{anowar2021conceptual} and locally linear embedding (LLE) \cite{ghojogh2020locally}, LDE uses label information to find the embedding and can naturally handle new test data in classification applications.  \\
		\\
		Our study will based on several contributions: 
		First, the generalization of theses methods to tensor data will conduct us to define some key concepts and properties associated to the t-product such as  trace of tensor, Laplacian tensor, positive definite and semi-definite tensor,...Moreover, we will present a theoretical examination of the trace-ratio tensor problem, discussing both the existence of solutions and optimality conditions.  We will develop the Tensor Newton-QR algorithm as a new approach for solving the trace ratio tensor problem. Lastly, we will formulate LE, LLE and LDE as a new trace ratio criterion based on tensor representation.\\
		\\
	The organization of the paper is as follows. In Section \ref{s2}, we present an overview of multilinear algebra concepts. In Section \ref{s3}, we study optimization problems related to trace-ratio tensor methods. Section \ref{s4} introduces the trace-ratio tensor methods using t-product. In Section \ref{s5}, we compare our approach with the state of the art. Section \ref{s6} concludes the paper.

	\section{Multilinear algebra concepts}\label{s2} {A tensor is a mathematical object that represents a wide range of data, including scalar values, vectors, matrices, and higher-dimensional arrays. A first-order tensor can be seen as a vector and a second-order tensor as a matrix. Let $\mathcal{A}\in \mathbb{R}^{n_{1} \times n_{2} \times n_{3}}$ be a third-order tensor, its $(i,j,k)$-th element is denoted by $\mathcal{A}_{ijk}$}. We can extract a fiber of $\mathcal{A}$ by fixing two indices, say $j$ and $k$. The column, line, and tube fibers are denoted by $\boldsymbol{a}_{.jk}$, $\boldsymbol{a}_{i.k}$, and $\boldsymbol{a}_{ij.}$, respectively. Similarly, we can define a slice of $\mathcal{A}$ by fixing one index out of three. {For a third-order tensor there are three modes of slices: horizontal (mode 1), lateral (mode 2), and frontal (mode 3) slides, represented by $\mathcal{A}_{i::}$, $\mathcal{A}_{:j:}$, and $\mathcal{A}_{::k}$, respectively.}
Further, $\mathbb{R}^{n_1 \times n_2\times n_3}$ denotes the space of {reel} third-order tensors of size $n_1 \times n_2 \times n_3,\ \mathbb{R}^{n_1 \times 1 \times n_3}$ stands for the space of lateral slices of size $n_1 \times 1 \times n_3$, and $\mathbb{R}^{1 \times 1 \times n_3}$ denotes the space of tubes with $n_3$ entries. {For more details see the work done by Kolda et al. \cite{kolda2009tensor}}.

Consider a tensor $\mathcal{A} \in \mathbb{R}^{n_1 \times n_2 \times n_3}$, the Frobenius norm of the tensor $\mathcal{A}$ can be expressed as follows
\[
\|\mathcal{A}\|_F=\sqrt{\sum_{i=1}^{n_1} \sum_{j=1}^{n_2} \sum_{k=1}^{n_3} \mathcal{A}_{i j k}^2},
\] 
and its associated inner product between two third-order tensors $\mathcal{A}$ and $\mathcal{B}$ in $\mathbb{R}^{n_1 \times n_2 \times n_3}$ is defined by
\[
\langle\mathcal{A}, \mathcal{B}\rangle=\sum_{i, j, k=1}^{n_1, n_2, n_3} \mathcal{A}_{i, j, k} \mathcal{B}_{i, j, k}.
\]
We recall the Kronecker product, given in the following definition.
\begin{definition}[Kronecker product] \label{def2}
The Kronecker product of matrices ${A} \in \mathbb{R}^{I \times J}$ and ${B} \in \mathbb{R}^{K \times L}$ is denoted by ${A} \otimes {B}$. The result is a matrix of size $(I K) \times(J L)$ and defined by
	$$
	\begin{aligned}
		{A} \otimes {B} & =\begin{pmatrix}
			a_{11} {B} & a_{12} {B} & \cdots & a_{1 J} {B} \\
			a_{21} {B} & a_{22} {B} & \cdots & a_{2 J} {B} \\
			\vdots & \vdots & \ddots & \vdots \\
			a_{I 1} {B} & a_{I 2} {B} & \cdots & a_{I J} {B}
		\end{pmatrix} \\
		& =\begin{pmatrix}
			{a}_1 \otimes {b}_1 & {a}_1 \otimes {b}_2 & {a}_1 \otimes {b} & \cdots & {a}_J \otimes {b}_{L-1} & {a}_J \otimes {b}_L
		\end{pmatrix}.
	\end{aligned}
	$$
\end{definition}
We define several important concepts. We start with the matricization of tensors, also known as unfolding or flattening. It consists of reordering the elements of a tensor into a matrix, for more details see \cite{kolda2009tensor}.
\begin{definition}\label{def1}
	Let $\mathcal{A} \in \mathbb{R}^{n_{1} \times n_{2} \times \ldots \times n_{N}}$, flattening $\mathcal{A}$ along the $k^{th}$ mode or the $k$-mode matricization of $\mathcal{A}$ gives a matrix denoted by $\mathcal{A}_{(k)}$ which consists in arranging the $k$-mode fibers to be the columns of the resulting matrix.
	Tensor element $\left(i_{1}, i_{2}, \ldots, i_{N}\right)$ maps to matrix element $\left(i_{k}, j\right)$, where
	\[
	j=1+\sum_{\substack{k=1 \\ k \neq n}}^{N}\left(i_{k}-1\right) J_{k}, \quad \text { with } \quad J_{k}=\prod_{\substack{m=1 \\ m \neq n}}^{k-1} n_{m}.
	\]
\end{definition}
The $k$-mode product \cite{kolda2009tensor} is defined as follows 
\begin{definition}\label{def4}
The ${k}$-mode product of $\mathcal{A} \in \mathbb{R}^{n_{1} \times n_{2} \times \ldots \times n_{N}}$ with a matrix $U \in \mathbb{R}^{m \times n_{k}}$ is a new tensor $\mathcal{B} \in \mathbb{R}^{n_{1} \times \ldots \times n_{k-1} \times {m} \times n_{k+1} \times \ldots \times n_{N}}$ defined by
	\[
	\left(\mathcal{A} \times_k {U}\right)_{i_1 \cdots i_{k-1} j i_{k+1} \cdots i_N}=\sum_{i_k=1}^{n_k} \mathcal{A}_{i_1 i_2 \cdots i_N} U_{j i_k}.
	\]
 \end{definition}
Let $\mathcal{A}$ and $U$ be a tensor and a matrix of appropriate sizes. The relation between the matricization and the $k$-mode product is given by the following equivalence.
	\[
	\mathcal{B}=\mathcal{A} \times_{k} U \Longleftrightarrow 
	\mathcal{B}_{(k)}=U \mathcal{A}_{(k)},
 \]
	where $\mathcal{A}_{(k)}$ and $\mathcal{B}_{(k)}$ denotes the $k$-mode matricization of $\mathcal{A}$ and $\mathcal{B}$, respectively.
 
The face-wise product has been used by Kilmer et al in \cite{kernfeld2015tensor}.
\begin{definition} [ face-wise product] \label{def3}
	Let $\mathcal{A} \in \mathbb{R}^{n_1\times l\times n_3}$ and $\mathcal{B} \in \mathbb{R}^{l\times n_2 \times n_3}$ be two third-order tensors. Then the face-wise product of $\mathcal{A}$ and $\mathcal{B}$ is the tensor of size $n_1 \times n_2 \times n_3$ whose $i$-th frontal slice is given from the product of the $i$-th frontal slices of $\mathcal{A}$ and $\mathcal{B}$, i.e.,
	$$
	(\mathcal{A} \triangle \mathcal{B})^{(i)}=\mathcal{A}^{(i)} \mathcal{B}^{(i)}.
	$$
\end{definition}
\subsection{The t-product} The t-product is a tensor-tensor product that has been introduced by Kilmer and her collaborators in \cite{kilmer2011factorization}. This product was only restricted to third-order tensors.\\
To introduce the t-product we need firstly to define some specific block matrices.
\begin{itemize}
    \item The block circulant matrix associated with $\mathcal{A} \in \mathbb{R}^{n_1 \times n_2 \times n_3}$ 
\begin{equation}\label{equ2.1}
	\text{\tt bcirc}(\mathcal{A})=\begin{pmatrix}
		\mathcal{A}^{(1)} & \mathcal{A}^{(n_3)} & \ldots & \mathcal{A}^{(2)} \\
		\mathcal{A}^{(2)} & \mathcal{A}^{(1)} & \ldots & \mathcal{A}^{(3)} \\
		\vdots & \ddots & \ddots & \vdots \\
		\mathcal{A}^{(n_3)} & \mathcal{A}^{(n_3-1)} & \ldots & \mathcal{A}^{(1)}
	\end{pmatrix} \in \mathbb{R}^{n_1 n_3 \times n_2 n_3} .
\end{equation}
\item The operator unfold applied to $\mathcal{A}$ gives the matrix made up of its frontal slices,
\[
\text{\tt unfold}(\mathcal{A})=\begin{pmatrix}
	\mathcal{A}^{(1)} \\
	\mathcal{A}^{(2)} \\
	\vdots \\
	\mathcal{A}^{(n_3)}
\end{pmatrix} \in \mathbb{R}^{n_1 n_3 \times n_2} .\]
We also will need the inverse operator \text{\tt fold} such that \text{\tt fold}(\text{\tt unfold} $(\mathcal{A}))=\mathcal{A}$.
\item The block diagonal matrix associated with $\mathcal{A}$ is defined as
\[
{\tt bdiag}(\mathcal{A})=\begin{pmatrix}
	\mathcal{A}^{(1)} & & & \\
	& \mathcal{A}^{(2)} & & \\
	& & \ddots & \\
	& & & \mathcal{A}^{(n_3)}
\end{pmatrix} \in \mathbb{R}^{n_1 n_3 \times n_2 n_3} .
\]
\end{itemize}
The t-product is given in the following definition.
\begin{definition}\label{def5}
	 Let $\mathcal{A} \in \mathbb{R}^{n_{1} \times q \times n_3 }$ and $\mathcal{B} \in \mathbb{R}^{q \times n_2 \times n_3 }$ be two third-order tensors. The t-product between $\mathcal{A}$ and $\mathcal{B}$ is defined by
	\[
	\mathcal{A} \star \mathcal{B}:={\tt fold}\left({\tt bcirc}\left(\mathcal{A}\right){\tt unfold}\left(\mathcal{B}\right)\right) \in \mathbb{R}^{n_{1} \times n_2 \times n_3}.
 \]
\end{definition}
According to \cite{kilmer2011factorization}, the discrete Fourier transform (DFT) can block-diagonalize the block circulant matrix \eqref{equ2.1}, i.e.,
\[
{\tt bcirc}\left(\mathcal{A}\right)=\left(F_{n_3}^H \otimes I_{n_1}\right) \text{\tt bdiag}(\widehat{\mathcal{A}})\left(F_{n_3} \otimes I_{n_2}\right),
\]
where $F_{n_3} \in \mathbb{C}^{n_3 \times n_3}$ is the discrete Fourier matrix, $F_n^{H}$ denotes its hermitian transpose, $\widehat{\mathcal{A}}$ stands for the Fourier transform of $\mathcal{A}$ along each tube, $I_{n_1} \in \mathbb{R}^{n_1 \times n_1}$ denotes the identity matrix. The tensor $\widehat{\mathcal{A}}$, can be computed with the fast Fourier transform (FFT) algorithm; see \cite{kilmer2011factorization} for details.
Using MATLAB notations, we have
\[
\widehat{\mathcal{A}}={\tt fft}\left(\mathcal{A},[\,], 3\right),
\]
The command for the inverse operation is
\[
\mathcal{A}={\tt ifft}\left(\widehat{\mathcal{A}},[\,], 3\right).
\]
Hence, according to \cite{kilmer2011factorization}, the t-product $\mathcal{C}=\mathcal{A} \star \mathcal{B}$ can be evaluated as
\begin{equation}\label{equ22}
\widehat{\mathcal{C}}^{(i)}=\widehat{\mathcal{A}}^{(i)} \widehat{\mathcal{B}}^{(i)}, \quad i=1,2, \ldots, n_3,
\end{equation}
where $\widehat{\mathcal{A}}^{(i)}, \widehat{\mathcal{B}}^{(i)}$, and $\widehat{\mathcal{C}}^{(i)}$ are the $i$-th frontal slices of the tensors $\widehat{\mathcal{A}}, \widehat{\mathcal{B}}$, and $\widehat{\mathcal{C}}$, respectively.\\
As mentioned earlier by Kilmer et al \cite{kilmer2013third}, the Discrete Fourier Transform (DFT) is symmetric when used with real data, which makes it easier to calculate the t-product using the FFT. This is explained in more detail in the following lemma.
\begin{lemma}[\cite{rojo2004some}]
	Given a real vector $v \in \mathbb{R}^{n}$, the corresponding DFT vector $\widehat{v}=F_{n} v$ satisfies
	\[
 \widehat{v}_1 \in \mathbb{R}, \quad {\tt conj}\left(\widehat{v}_i\right)=\widehat{v}_{n-i+2}, \quad i=2,3, \ldots,\left[\frac{n+1}{2}\right],
	\]
	In this context, {\tt conj} is used to represent the complex conjugation operator, while  $\left[\frac{n_{3}+1}{2}\right]$ indicates the integer part of $\frac{n_{3}+1}{2}$.
\end{lemma}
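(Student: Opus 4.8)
The plan is to argue directly from the explicit entries of the Fourier matrix $F_n$, whose $(j,k)$ entry is $\omega^{(j-1)(k-1)}$ with $\omega = e^{-2\pi \mathrm{i}/n}$ a primitive $n$-th root of unity (the imaginary unit being $\mathrm{i}$; any real normalization factor is harmless for the conjugacy relations). The $j$-th transform coefficient is then $\widehat{v}_j = \sum_{k=1}^{n} v_k\,\omega^{(j-1)(k-1)}$, and both claims follow from elementary properties of $\omega$.

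For the reality of the first coefficient, I would simply set $j=1$: every factor $\omega^{0}=1$, so $\widehat{v}_1 = \sum_{k=1}^{n} v_k$ is a sum of real numbers and hence real.

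For the conjugate-symmetry relation, the three facts I would exploit are that $v$ is real (so $\overline{v_k}=v_k$), that $\overline{\omega}=\omega^{-1}$, and that $\omega^{n}=1$. Conjugating the defining sum gives $\overline{\widehat{v}_i} = \sum_{k=1}^{n} v_k\,\omega^{-(i-1)(k-1)}$. Evaluating the transform at the index $n-i+2$ produces the exponent $(n-i+1)(k-1)$, which by the periodicity $\omega^{n}=1$ equals $\omega^{-(i-1)(k-1)}$ for every $k$. A term-by-term comparison then yields $\widehat{v}_{n-i+2}=\overline{\widehat{v}_i}$.

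This is the classical Hermitian symmetry of the DFT of a real signal, so I do not expect a genuine obstacle. The only points requiring care are the off-by-one bookkeeping forced by the $1$-based indexing of $F_n$ (so that frequency index $j$ carries the power $j-1$), and checking that the stated range $i=2,\ldots,\left[\frac{n+1}{2}\right]$ is precisely the set of indices that enumerates every independent conjugate pair exactly once, with the boundary cases $\widehat{v}_1$ and, for even $n$, the middle coefficient $\widehat{v}_{n/2+1}$ being the self-conjugate (real) ones.
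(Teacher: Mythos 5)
Your argument is correct: it is the standard Hermitian-symmetry computation for the DFT of a real vector, and the index bookkeeping (the pairing $i \leftrightarrow n-i+2$ under $1$-based indexing, with $\widehat{v}_1$ and, for even $n$, $\widehat{v}_{n/2+1}$ self-conjugate) is handled properly. The paper itself gives no proof of this lemma — it is quoted directly from the cited reference — so there is no internal argument to compare against; your direct computation from the entries of $F_n$ is exactly the expected justification and fills that gap cleanly.
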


\noindent It follows that for a third-order tensor $\mathcal{A} \in \mathbb{R}^{n_{1} \times n_2\times n_3}$, we have
\[
\widehat{\mathcal{A}}^{(1)} \in \mathbb{R}^{n_{1} \times n_2}, \quad \text{\tt conj}\left(\widehat{\mathcal{A}}^{(i)}\right)=\widehat{\mathcal{A}}^{(n_3-i+2)}, \quad i=2,3, \ldots,\left[\frac{n_3+1}{2}\right].
\]
This shows that the t-product of two third-order tensors can be determined by evaluating just about half the number of products involved in \eqref{equ22}. Algorithm \ref{algo1} describes the computations.
\begin{algorithm}[H]
	\caption{t-product of third-order tensors}
	\label{algo1}
	\begin{algorithmic}
		\STATE{\textbf{Input:} $\mathcal{A} \in \mathbb{R}^{n_1 \times q \times n_3 }, \mathcal{B} \in \mathbb{R}^{q \times n_2 \times n_3 }$.}
		\STATE{\textbf{Output:} $\mathcal{C}:=\mathcal{A} \star \mathcal{B} \in \mathbb{R}^{n_1 \times n_2 \times n_3}$.}\\
		\STATE{Compute $\widehat{\mathcal{A}}={\tt fft}(\mathcal{A},[\,], 3), \widehat{\mathcal{B}}={\tt fft}(\mathcal{B},[\,], 3)$.}
		\FOR{$i=1, \ldots,\left[\frac{n_3+1}{2}\right]$}
  		\STATE{$ \widehat{\mathcal{C}}^{(i)}=\widehat{\mathcal{A}}^{(i)} \widehat{\mathcal{B}}^{(i)}$.}
		\ENDFOR
		\FOR{$i=\left[\frac{n_3+1}{2}\right]+1, \ldots, n_3$}
		\STATE{$\widehat{\mathcal{C}}^{(i)}=\text{\tt conj}\left(\widehat{\mathcal{C}}^{(n_3-i+2)}\right)$}.
		\ENDFOR
		\STATE{$\mathcal{C}={\tt ifft}(\widehat{\mathcal{C}},[\,], 3)$}.
	\end{algorithmic}
\end{algorithm}
The following definition is concerned with the t-product of a third-order tensor and a tube.
\begin{definition}\label{def6} \cite{el2023svds}
Given a tensor $\mathcal{A} \in \mathbb{R}^{n_1 \times n_2 \times n_3}$ and a tube $\boldsymbol{b} \in \mathbb{R}^{1\times 1 \times n_3}$, we define $\mathcal{C} := \mathcal{A} \star \boldsymbol{b} \in \mathbb{R}^{n_1 \times n_2 \times n_3}$. This results from applying the inverse DFT (Discrete Fourier Transform) along each tube of $\widehat{\mathcal{C}}$.
 where each frontal slice is determined by the usual matrix product between each frontal slice of $\widehat{\mathcal{A}}$ and $\widehat{\boldsymbol{b}}$, i.e.,
 \[
\widehat{\mathcal{C}}^{(i)}=\widehat{\mathcal{A}}^{(i)}\widehat{\boldsymbol{b}}^{(i)}=\widehat{\boldsymbol{b}}^{\left(i\right)} \widehat{\mathcal{A}}^{(i)}, \quad i=1,2, \ldots, n_3.
	\]
\end{definition}
From \cite{lu2019tensor}, we have the following relation
\begin{equation}\label{fft}
 \mathcal{A}^{(1)} =\dfrac{1}{n_3} \sum_{i = 1}^{n_3} \widehat{\mathcal{A}}^{(i)}.   
\end{equation}

Next, we give some definitions of some specific tensors. All these notions are in \cite{kilmer2011factorization}
\begin{definition}[Identity tensor]\label{def7}
	The Identity tensor $\mathcal{I}_{n} \in \mathbb{R}^{n \times n\times n_3}$ is the tensor whose first frontal slice is the $n \times n$ identity matrix, and whose other frontal slices are all zeros.
\end{definition}
\begin{definition}[Tensor transpose]\label{def8}
	The transpose of a real third-order tensor, $\mathcal{A} \in \mathbb{R}^{n_1 \times n_2\times n_3}$, denoted by $\mathcal{A}^T \in \mathbb{R}^{n_2 \times n_1\times n_3}$, is the tensor obtained by first transposing each one of the frontal slices of $\mathcal{A}$, and then reversing the order of the transposed frontal slices 2 through $n_3$. {Moreover, for a square tensor $\mathcal{A}\in \mathbb{R}^{n\times n\times n_3}$, $\mathcal{A}$ is considered f-symmetric if $\mathcal{A} = \mathcal{A}^{\top}$.}\\
	Let the third-order tensors $\mathcal{A}$ and $\mathcal{B}$ be such that the products $\mathcal{A} \star \mathcal{B}$ and $\mathcal{B}^{T} \star \mathcal{A}^{T}$ are defined. Then, similarly to the matrix transpose, the tensor transpose satisfies $(\mathcal{A} \star \mathcal{B})^{T}=$ $\mathcal{B}^{T} \star \mathcal{A}^{T}$.\\
 A tensor $\mathcal{Q} \in \mathbb{R}^{n \times n\times n_3}$ is said to be f-orthogonal (or f-unitary) if and only if
	\[
	\mathcal{Q}^{T} \star \mathcal{Q}=\mathcal{Q} \star \mathcal{Q}^{T}=\mathcal{I}_{n} .
	\]
\end{definition}
\begin{definition}[Positive Definite (Semi-Definite) Tensor]\label{def9}
	Consider the tensor $\mathcal{A} \in\mathbb{R}^{n \times n \times n_3}$, the tensor $\mathcal{A}$ is positive definite (semi-definite) if and only if each frontal slice $\widehat{\mathcal{A}}^{(i)}$ is positive definite (semi-definite). 
\end{definition}

\begin{definition}[Laplacian tensor]
Consider the tensor \(\mathcal{A} \in \mathbb{R}^{n \times n \times n_3}\). The tensor \(\mathcal{A}\) is called a Laplacian tensor if each of frontal slices \(\widehat{\mathcal{A}}^{(i)}\) is a Laplacian matrix.
\end{definition}\\
Note that the Laplacian tensor is f-symmetric semi-definite.

{
\begin{remark}\label{remark}
    A f-symmetric positive definite tensor is invertible.
\end{remark}
The proof can be seen easily from the fact that each frontal slice in the Fourier space of a tensor is positive definite.
}
\begin{definition}[Inverse of tensor]\label{def10}
	We say that the tensor $\mathcal{A} \in \mathbb{R}^{n \times n\times n_3}$ is nonsingular {(invertible)} if there exists a tensor $\mathcal{B} \in \mathbb{R}^{n \times n \times n_3 }$ such that the following conditions hold
	\[
	\mathcal{B} \star \mathcal{A}=\mathcal{A} \star \mathcal{B}=\mathcal{I}_{n},
	\]
	where $\mathcal{B}$ is the inverse of the tensor $\mathcal{A}$, denoted by $\mathcal{A}^{-1}$, and $\mathcal{I}_{n}$ is the Identity tensor of size $n \times n \times n_3$.
\end{definition}
\begin{definition}[Tensor trace]\label{def11}
	Consider a tensor $\mathcal{A} \in \mathbb{R}^{n \times n\times n_3}$, its trace can be defined by
 {
	\[
	\operatorname{Trace}({\mathcal{A}}) = \frac{1}{n_3}\sum_{i=1}^{n_3} \operatorname{Trace} \left(\widehat{\mathcal{A}}^{(i)}\right).
	\]
 }
\end{definition}
According to Equation \eqref{fft}, the trace of a third-order tensor satisfies also the following relation 
{
\[
Trace\left(\mathcal{A}\right)= \operatorname{Trace}\left(\mathcal{A}^{(1)}\right).
\]
}
\begin{proposition}\label{pro1}
	Consider a tensor $\mathcal{A} \in \mathbb{R}^{n_1 \times n_2 \times n_3}$,  the Frobenius norm of the tensor $\mathcal{A}$ can be expressed as follows
 {
 \[
\|\mathcal{A}\|_F^{2}=Trace\left(\mathcal{A} \star \mathcal{A}^{T}\right).
\]
}
\end{proposition}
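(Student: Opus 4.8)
The plan is to transport both sides of the identity into the Fourier domain, where the tensor trace of Definition \ref{def11} and the t-product of Equation \eqref{equ22} both decompose frontal-slice by frontal-slice, and then recognize the right-hand side as a sum of slicewise Frobenius norms.

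First I would establish a Parseval-type identity for the Frobenius norm. Since the (unnormalized) DFT matrix satisfies $F_{n_3}^H F_{n_3} = n_3 I_{n_3}$, applying the transform tube-by-tube along the third mode scales the squared norm of every tube by $n_3$, so that
\[
\|\mathcal{A}\|_F^2 = \frac{1}{n_3}\,\|\widehat{\mathcal{A}}\|_F^2 = \frac{1}{n_3}\sum_{i=1}^{n_3}\|\widehat{\mathcal{A}}^{(i)}\|_F^2 .
\]
This already rewrites the left-hand side entirely in terms of the frontal slices of $\widehat{\mathcal{A}}$, matching the averaging structure of Definition \ref{def11}.

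Next I would treat the right-hand side. Setting $\mathcal{C} = \mathcal{A}\star\mathcal{A}^T$, Equation \eqref{equ22} gives $\widehat{\mathcal{C}}^{(i)} = \widehat{\mathcal{A}}^{(i)}\,\widehat{(\mathcal{A}^T)}^{(i)}$ for each $i$. The key algebraic fact I would invoke is that tensor transposition becomes slicewise Hermitian transposition in the Fourier domain, i.e. $\widehat{(\mathcal{A}^T)}^{(i)} = \bigl(\widehat{\mathcal{A}}^{(i)}\bigr)^H$; this follows from Definition \ref{def8} combined with the conjugate-symmetry pattern of the DFT of real data recorded in the lemma above. Consequently $\widehat{\mathcal{C}}^{(i)} = \widehat{\mathcal{A}}^{(i)}\bigl(\widehat{\mathcal{A}}^{(i)}\bigr)^H$, and the trace of a matrix times its conjugate transpose is exactly its squared Frobenius norm, so $\operatorname{Trace}\bigl(\widehat{\mathcal{C}}^{(i)}\bigr) = \|\widehat{\mathcal{A}}^{(i)}\|_F^2$.

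Finally I would combine the two computations through Definition \ref{def11}:
\[
\operatorname{Trace}\bigl(\mathcal{A}\star\mathcal{A}^T\bigr) = \frac{1}{n_3}\sum_{i=1}^{n_3}\operatorname{Trace}\bigl(\widehat{\mathcal{C}}^{(i)}\bigr) = \frac{1}{n_3}\sum_{i=1}^{n_3}\|\widehat{\mathcal{A}}^{(i)}\|_F^2 = \|\mathcal{A}\|_F^2,
\]
which is the claimed equality. I expect the main obstacle to be the transpose-in-Fourier-domain identity $\widehat{(\mathcal{A}^T)}^{(i)} = \bigl(\widehat{\mathcal{A}}^{(i)}\bigr)^H$: the reversal of frontal slices $2$ through $n_3$ built into Definition \ref{def8} must be matched precisely against the conjugate-symmetry of the DFT so that transposition collapses to Hermitian conjugation on each individual slice. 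Once that is secured, the remainder is a routine slicewise trace computation.
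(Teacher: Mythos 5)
Your proposal is correct and follows essentially the same route as the paper's own proof: Parseval's identity to pass to the Fourier domain, slicewise identification of $\operatorname{Trace}\bigl(\widehat{\mathcal{A}}^{(i)}(\widehat{\mathcal{A}}^{(i)})^H\bigr)$ with $\|\widehat{\mathcal{A}}^{(i)}\|_F^2$, and the definition of the tensor trace to reassemble. If anything, you are slightly more careful than the paper, which writes the slicewise factor as $\widehat{\mathcal{A}}^{(i)\top}$ rather than the Hermitian transpose $(\widehat{\mathcal{A}}^{(i)})^H$ that the complex Fourier-domain slices actually require.
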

\begin{proof}
    Form \cite{lu2019tensor} and the definition of Frobenius norm we have
    {
\[
\begin{aligned}
\|\mathcal{A}\|_F^2 & =\frac{1}{n_3} \|\widehat{\mathcal{A}}\|_F^2 \\
& = \frac{1}{n_3} \sum_{i=1}^{n_3}\|\widehat{\mathcal{A}}^{(i)}\|_F^2 \\
& =\frac{1}{n_3}  \sum_{i=1}^{n_3} Trace\left(\widehat{\mathcal{A}}^{(i) }\widehat{\mathcal{A}}^{(i)\top}\right)\\
& = \frac{1}{n_3}  \sum_{i=1}^{n_3} Trace\left(\widehat{\left(\mathcal{A} \star \mathcal{A}^{\top}\right)^{(i)}}\right) =  Trace({\mathcal{A} \star \mathcal{A}^{\top}}).
\end{aligned}
\]
}
\end{proof}
{It can be also seen that 
\[
\left<\mathcal{A},\mathcal{B}\right>=Trace\left(\mathcal{A}\star \mathcal{B}^T\right)=Trace\left(\mathcal{A}^T\star \mathcal{B}\right).
\]}
\begin{definition}[F-diagonal tensor]\label{def13}
	{Consider the tensor $\mathcal{A} \in \mathbb{R}^{n \times n \times n_3}$, the tensor $\mathcal{A}$ is f-diagonal if each frontal slice $\mathcal{A}^{(i)}$ is diagonal, and the diagonal of the tensor $\mathcal{A}$ can be represented by a matrix of size $n \times n_3$, where each column contains the diagonal elements of the corresponding frontal slice $\mathcal{A}^{(i)}$, and it is denoted by $\text{Diag$\left(\mathcal{A}\right)$}$.}
\end{definition}

The rank of a tensor is defined dependently on the type of the tensor product used. There have been defined ranks linked with the t-product, and each one is associated with a specific application, see \cite{lu2019tensor,kernfeld2015tensor}.
\begin{definition}[The tubal rank]
Let $\mathscr{A} \in \mathbb{R}^{\ell \times p \times n_3}$ be a third-order tensor, its tubal rank is defined as
\[
\operatorname{rank}_t(\mathscr{A})=\operatorname{card}\left\{\sigma_i \neq 0, \quad i=1,2, \ldots, \min \{\ell, p\}\right\},
\]
where $\sigma_i$ is the norm of the singular tube $s_i$ given from t-svd of $\mathcal{A}$, for more details see \cite{kernfeld2015tensor,lu2019tensor,el2023svds}.
\end{definition}

The range and the null space of a third-order tensor under the t-product have been defined by El hachimi et al \cite{el2023spectral}. Let $\mathcal{A}\in \mathbb{R}^{n_1\times n_2\times n_3}$ be a third-order tensor, then
\[
\text{Range}\left(\mathcal{A}\right)=\{\mathcal{A}\star \mathcal{X}/\, \mathcal{X}\in \mathbb{R}^{n_2\times 1\times n_3}\},
\]
and the null space of $\mathscr{A}$ is defined by
\[
\begin{gathered}
\operatorname{Null}_{\mathrm{t}}(\mathscr{A})=\left\{\mathscr{X}_1, \ldots, {\mathscr{X}}_r \in \mathbb{R}^{p\times 1 \times n_3}: \widehat{\mathscr{X}}_j^{(i)} \in \operatorname{Null}\left(\widehat{\mathscr{A}}^{(i)}\right) \text { for } i=1,2, \ldots, n_3,\right. \\
\text { with } \left.\left\|\widehat{\mathscr{X}}_j^{(i)}\right\|_F \geqslant\left\|\widehat{\mathscr{X}}_{j+1}^{(i)}\right\|_F, j=1,2, \ldots,{ r_i}\right\},
\end{gathered}
\]
where $r_i=dim\left(Null\left(\widehat{\mathcal{A}}^{(i)}\right)\right)$ and  $r=\min_{1\leq i\leq n_3}r_i$. For further details, see \cite{el2023spectral}.

The generalization of eigenvalues and eigenvectors was presented and detailed in \cite{el2023spectral}. This concept will be essential in the coming section. 
\begin{definition}
Consider the tensor $\mathcal{B} \in \mathbb{R}^{n \times n \times n_3}$. {A tube $\theta \in \mathbb{C}^{1 \times 1 \times n_3}$} is recognized as an eigentube of $\mathcal{B}$ corresponding to a specific lateral slice {$ \vec{\mathcal{V}} \not \equiv \mathbf{0}\in\mathbb{C}^{n \times 1 \times n_3}$}, if they satisfy  the condition
\[
                \mathcal{B} \star \ \vec{\mathcal{V}} =  \vec{\mathcal{V} }\star \theta,
\]
the lateral slice $ \vec{\mathcal{V}}$ is identified as an eigenslice or right eigenslice of $\mathcal{B}$ associated with $\theta$. Furthermore, the pair $\{\theta,  \vec{\mathcal{V}}\}$ is referred to as an eigenpair of $\mathcal{B}$.
With $\mathcal{B} \not \equiv \boldsymbol{0}$ implies that $\widehat{\mathcal{B}}^{(i)} \neq 0$ for every $i=1,2, \ldots, n_3$.
\end{definition}

Authors in \cite{el2023spectral} have defined a specific order for eigentubes. Consider a third-order tensor $\mathcal{B} \in \mathbb{R}^{n \times n \times n_3}$. For each $k = 1, 2, \ldots, n_3$, the eigenvalues of $\widehat{\mathcal{B}}^{(k)}$ are denoted as $\delta_{1, k}, \delta_{2, k}, \ldots, \delta_{n, k}$. These eigenvalues are arranged such that
\[
\left|\delta_{l, k}\right| \geq \left|\delta_{l+1, k}\right|, \quad l=1,2, \ldots, n-1.
\]
The sequence of ordered eigentubes $\mu_1, \mu_2, \ldots, \mu_n$ of $\mathcal{B}$ is then defined as
\[
\widehat{\mu}_l^{(k)}=\delta_{l, k}, \quad k=1,2, \ldots, n_3, \quad l=1,2, \ldots, n,
\]
{Note that a tensor $\mathcal{B}^{n \times n \times n_3}$ admits at most $n$ ordered eigentubes. In our work, we are interested in this eigentubes}; see \cite{el2023spectral} for more details.
\begin{definition}[Tensor f-diagonalization]
\label{def16}
	{Let $\mathcal{A} \in \mathbb{R}^{n \times n \times n_3}$. $\mathcal{A}$ is said to be f-diagonalizable if it is similar to an  f-diagonal tensor, i.e., 
 \[
\mathcal{A}=\mathcal{V}\star \mathcal{D}\star \mathcal{V}^{-1},
 \]
 for some invertible tensor $\mathcal{V}\in \mathbb{R}^{n\times n\times n_3}$ and an f-diagonal tensor $\mathcal{D}\in \mathbb{R}^{n\times n\times n_3}$. In this case, $\mathcal{V}$ and $\mathcal{D}$ contain the eigenslices and the eigentubes, respectively, of $\mathcal{A}.$}
\end{definition}

\section{Trace-Ratio Tensor problem}\label{s3}
The trace ratio tensor problem is an important concept in machine learning for tasks such as feature extraction and dimensionality reduction, as it aids in the analysis of complicated, multidimensional data and can be expressed as follows
\begin{equation}\label{equ34}
	\max _{\mathcal{V} \in \mathbb{R}^{n_1 \times d\times n_3 }} \frac{\operatorname{Trace}\left[\mathcal{V}^T \star \mathcal{A} \star \mathcal{V}\right]}{\operatorname{Trace}\left[\mathcal{V}^T  \star\mathcal{B} \star \mathcal{V}\right]},\text{ subject to}\ \ \mathcal{V}^T \star \mathcal{V}=\mathcal{I}_{d}.
\end{equation}
In this formulation, $\mathcal{V} \in \mathbb{R}^{n \times d\times n_3 }$ is required to have {f-orthonormal lateral slices}, i.e., $\mathcal{V}(:,i,:)^T\star \mathcal{V}(:,i,:)=\bm{e}$, with $\bm{e}\in \mathbb{R}^{1\times 1\times n_3}$ has zero components and $\bm{e}(1,1,1)=1$,  and $\mathcal{A}\in \mathbb{R}^{n\times n\times n_3}$ is an {f-symmetric} tensor, $\mathcal{B}\in \mathbb{R}^{n\times n\times n_3}$ is assumed to be {f-symmetric} and positive definite tensor.
This problem can be replaced by a simpler, yet not equivalent problem
\begin{equation}
	\max _{\mathcal{V} \in \mathbb{R}^{n \times d\times n_3 }} \operatorname{Trace}\left[\mathcal{V}^T \star \mathcal{A} \star \mathcal{V}\right], \text{ subject to    }
	\mathcal{V}^T \star \mathcal{B}  \star \mathcal{V}=\mathcal{I}_{d}.
\end{equation}
In practice, Problem \eqref{equ34} often arises as a simplification of an objective function that is more difficult to maximize, which can be described as follows
\begin{equation}
	\max _{\mathcal{V} \in \mathbb{R}^{n \times d\times n_3 }} \frac{\operatorname{Trace}\left[\mathcal{V}^T \star \mathcal{A} \star \mathcal{V}\right]}{\operatorname{Trace}\left[\mathcal{V}^T  \star\mathcal{B} \star \mathcal{V}\right]},\text{ subject to}\ \ \mathcal{V}^T \star \mathcal{C} \star \mathcal{V}=\mathcal{I}_{d}, 
\end{equation}
where $\mathcal{B}$ and $\mathcal{C}$ are assumed to be {f-symmetric} and positive definite for simplicity. The tensor $\mathcal{C}$ defines the desired {f-orthogonality} and in the simplest case, it is just the Identity tensor.
\subsection{Existence and uniqueness of a solution}
Let us consider the following theorem, which is important in our analysis.
\begin{theorem}\label{theo1}
Let $\mathcal{A} \in \mathbb{R}^{n \times n \times n_3} $ and $\mathcal{B}\in\mathbb{R}^{n \times n \times n_3} $, where $\mathcal{A}$ is {f-symmetric} and $\mathcal{B}$ is an {f-symmetric} positive definite tensor. Consider the optimization problem as follows
	\begin{equation}\label{equ31}
		\underset{{\mathcal{U}}\in \mathbb{R}^{n \times d \times n_3}}{\max } \operatorname{Trace}\left({\mathcal{U}}^T \star \mathcal{A} \star {\mathcal{U}}\right), \text{ subject to } {\mathcal{U}}^T \star \mathcal{B} \star {\mathcal{U}}={\mathcal{I}_{d}}.
	\end{equation}
	The problem \eqref{equ31} achieves a maximum, and the solution of \eqref{equ31} is the $d$ eigenslices associated to the $d$ largest eigentube of the following generalized eigentube problem
 	\begin{equation}\label{equ32}
		{ \mathcal{A} \star\mathcal{\mathcal{U}}=\mathcal{B}} \star \mathcal{U} \star {\Lambda}, 
	\end{equation}
 where $\Lambda\in \mathbb{R}^{d\times d\times n_3}$ is a f-diagonal tensor.\\
	 In the case of minimization, the solution of \eqref{equ31} is the $d$ eigenslices associated with the $d$ smallest non-zero eigentube of the generalized eigentube problem \eqref{equ32}.
\end{theorem}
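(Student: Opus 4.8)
The plan is to pass to the Fourier domain along the third mode, where the $\star$-product decouples into independent frontal-slice matrix products, and thereby reduce the tensor problem to $n_3$ uncoupled classical matrix trace-maximization problems whose solutions are already understood. First I would rewrite both the objective and the constraint slice by slice in the transform domain. Using the block-diagonalization ${\tt bcirc}(\mathcal{A})=(F_{n_3}^H\otimes I_{n_1}){\tt bdiag}(\widehat{\mathcal{A}})(F_{n_3}\otimes I_{n_2})$, together with the facts that $\widehat{(\mathcal{A}^T)}^{(i)}=(\widehat{\mathcal{A}}^{(i)})^H$ and that the identity tensor transforms to $\widehat{\mathcal{I}_d}^{(i)}=I_d$ for every $i$, the definition of the tensor trace (Definition~\ref{def11}) gives
\[
\operatorname{Trace}(\mathcal{U}^T\star\mathcal{A}\star\mathcal{U})=\frac{1}{n_3}\sum_{i=1}^{n_3}\operatorname{Trace}\big((\widehat{\mathcal{U}}^{(i)})^H\widehat{\mathcal{A}}^{(i)}\widehat{\mathcal{U}}^{(i)}\big),
\]
while the constraint of \eqref{equ31} becomes $(\widehat{\mathcal{U}}^{(i)})^H\widehat{\mathcal{B}}^{(i)}\widehat{\mathcal{U}}^{(i)}=I_d$ for each $i=1,\dots,n_3$. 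Since $\mathcal{A}$ is f-symmetric and $\mathcal{B}$ is f-symmetric positive definite, each $\widehat{\mathcal{A}}^{(i)}$ is Hermitian and each $\widehat{\mathcal{B}}^{(i)}$ is Hermitian positive definite by Definition~\ref{def9}.

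Next I would exploit that both the objective (a sum over $i$) and the constraints (one per $i$) decouple across frontal slices, so the global maximum is attained by maximizing each summand independently. Each decoupled subproblem is the standard generalized matrix trace problem
\[
\max_{\widehat{\mathcal{U}}^{(i)}}\operatorname{Trace}\big((\widehat{\mathcal{U}}^{(i)})^H\widehat{\mathcal{A}}^{(i)}\widehat{\mathcal{U}}^{(i)}\big)\quad\text{s.t.}\quad(\widehat{\mathcal{U}}^{(i)})^H\widehat{\mathcal{B}}^{(i)}\widehat{\mathcal{U}}^{(i)}=I_d.
\]
Through the substitution $W=(\widehat{\mathcal{B}}^{(i)})^{1/2}\widehat{\mathcal{U}}^{(i)}$, legitimate because $\widehat{\mathcal{B}}^{(i)}$ is Hermitian positive definite, this reduces to maximizing $\operatorname{Trace}(W^H(\widehat{\mathcal{B}}^{(i)})^{-1/2}\widehat{\mathcal{A}}^{(i)}(\widehat{\mathcal{B}}^{(i)})^{-1/2}W)$ over $W^HW=I_d$. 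By the Ky Fan maximum principle the value equals the sum of the $d$ largest eigenvalues of the Hermitian matrix $(\widehat{\mathcal{B}}^{(i)})^{-1/2}\widehat{\mathcal{A}}^{(i)}(\widehat{\mathcal{B}}^{(i)})^{-1/2}$, which are exactly the $d$ largest generalized eigenvalues $\delta_{1,i}\ge\cdots\ge\delta_{d,i}$ of the pencil $(\widehat{\mathcal{A}}^{(i)},\widehat{\mathcal{B}}^{(i)})$, attained when the columns of $\widehat{\mathcal{U}}^{(i)}$ are the corresponding generalized eigenvectors.

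Finally I would reassemble. Setting $\widehat{\Lambda}^{(i)}=\operatorname{diag}(\delta_{1,i},\dots,\delta_{d,i})$ and applying the inverse FFT recovers an f-diagonal tensor $\Lambda$ and a tensor $\mathcal{U}$ that is f-orthonormal with respect to $\mathcal{B}$, while the slice-wise identities $\widehat{\mathcal{A}}^{(i)}\widehat{\mathcal{U}}^{(i)}=\widehat{\mathcal{B}}^{(i)}\widehat{\mathcal{U}}^{(i)}\widehat{\Lambda}^{(i)}$ fold back into the generalized eigentube equation $\mathcal{A}\star\mathcal{U}=\mathcal{B}\star\mathcal{U}\star\Lambda$ of \eqref{equ32}. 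For minimization, the Ky Fan minimum principle selects the $d$ smallest nonzero generalized eigenvalues in each slice, the nonzero restriction discarding trivial null directions, which yields the stated result. I expect the main obstacle to be two bookkeeping issues rather than any deep difficulty: first, guaranteeing that the reconstructed $\mathcal{U}$ and $\Lambda$ are \emph{real} tensors, which forces the eigenvectors of the conjugate-paired slices $\widehat{\mathcal{A}}^{(i)}$ and $\widehat{\mathcal{A}}^{(n_3-i+2)}$ to be chosen as complex conjugates of one another, consistent with the symmetry lemma; and second, reconciling the magnitude-based ordering of eigentubes from the earlier definition with the value-based ordering demanded by trace maximization, which agree here because the relevant generalized eigenvalues are real and the selection is by the top (respectively bottom) $d$ within each frontal slice.
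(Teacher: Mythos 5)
Your proof is correct, but it takes a genuinely different route from the paper's. The paper stays entirely in the tensor domain: it argues existence by continuity on the constraint set, forms the Lagrangian $\operatorname{Trace}(\mathcal{U}^T\star\mathcal{A}\star\mathcal{U})-\langle\mathcal{L},\mathcal{U}^T\star\mathcal{B}\star\mathcal{U}-\mathcal{I}_d\rangle$, computes the directional derivative of the trace term to get the stationarity condition $\mathcal{A}\star\mathcal{U}=\mathcal{B}\star\mathcal{U}\star\mathcal{L}$, and then f-diagonalizes the f-symmetric multiplier $\mathcal{L}=\mathcal{P}\star\Lambda\star\mathcal{P}^T$ to reach the generalized eigentube problem \eqref{equ32}; the identification of the optimum with the top-$d$ eigenslices is then essentially asserted by ordering the eigentubes. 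You instead decouple in the Fourier domain into $n_3$ independent Hermitian subproblems and apply the Ky Fan principle after the congruence $W=(\widehat{\mathcal{B}}^{(i)})^{1/2}\widehat{\mathcal{U}}^{(i)}$. Your route buys exactly the step the paper leaves implicit: Ky Fan certifies that the \emph{global} maximum, not merely a stationary point, equals the sum of the $d$ largest generalized eigenvalues of each pencil $(\widehat{\mathcal{A}}^{(i)},\widehat{\mathcal{B}}^{(i)})$, and it makes the optimal value $\frac{1}{n_3}\sum_{i}\sum_{j}\widehat{\lambda}_j^{(i)}$ immediate; the paper's route has the advantage of never leaving the tensor formalism and of exhibiting the KKT structure that is reused in the subsequent Newton--QR development. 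Two remarks on your bookkeeping. First, you are right that the slices cannot be optimized fully independently if $\mathcal{U}$ is to be real; this is harmless because $\widehat{\mathcal{A}}^{(n_3-i+2)}=\operatorname{conj}(\widehat{\mathcal{A}}^{(i)})$ and likewise for $\mathcal{B}$, so conjugate-paired subproblems admit conjugate optimizers with the same optimal value. Second, your claim that the magnitude-based ordering of eigentubes agrees with the value-based ordering is not true in general when $\mathcal{A}$ is indefinite; the correct selection for maximization is by algebraic value of the real generalized eigenvalues in each slice, a point on which the paper's own statement is equally loose, so this does not invalidate your argument but should be stated as the ordering you actually use.
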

\begin{proof}
	We have the set of tensors $\mathcal{U}$ such that $\mathcal{U}^T \star\mathcal{B}  \star\mathcal{U}=\mathcal{I}_{d}$ is closed, under the assumptions, the function in the right-hand side of \eqref{equ31} is a continuous function of its argument, therefore, the maximum of the problem \eqref{equ31} is reached. Then the Lagrange multipliers of \eqref{equ31} is given by
	\[
	\phi(\mathcal{U},\mathcal{L})=\operatorname{Trace}\left(\mathcal{U}^{T} \star \mathcal{A} \star \mathcal{U}\right)-\langle \mathcal{L} ,\mathcal{U}^{T} \star \mathcal{B} \star \mathcal{U}-\mathcal{I}_{d}\rangle,
	\]
	where $ \mathcal{L} $ represents the Lagrangian tensor.\\
	To compute the derivative of $\phi$ respect to $\mathcal{U}$ we need first to compute the derivative of $h_{\mathcal{A}}(\mathcal{U})=\operatorname{Trace}\left(\mathcal{U}^T \star \mathcal{A} \star \mathcal{U}\right) $. We have
 \[
 \langle \mathcal{A},\mathcal{B}\rangle = \text{Trace}\left(\mathcal{A}^T \star \mathcal{B}\right).
 \]
	Then $h_{\mathcal{A}}$ can be write as $h_{\mathcal{A}}(\mathcal{U}) = \langle \mathcal{A} \star \mathcal{U},\mathcal{U}\rangle$
	Therefore, the directional derivative of $h_{\mathcal{A}}$ with respect to $\mathcal{U}$ in the direction $\mathcal{H}$ can be expressed as
	\begin{align*}
		D_\mathcal{U}h_{\mathcal{A}} \left(\mathcal{H}\right)&=\lim _{t \rightarrow 0} \frac{h_{\mathcal{A}} (\mathcal{U} +t\mathcal{H}) - h_{\mathcal{A}} (\mathcal{U})}{t}\\
		&=\lim _{t \rightarrow 0} \frac{\langle \mathcal{A}\star(\mathcal{U} +t\mathcal{H}),\mathcal{U} +t\mathcal{H}\rangle - \langle \mathcal{A}\star\mathcal{U},\mathcal{U}\rangle}{t}\\
		&=\lim _{t \rightarrow 0} \frac{ t\langle\mathcal{A}\star\mathcal{U},\mathcal{H}\rangle +t\langle \mathcal{A}\star\mathcal{H},\mathcal{U}\rangle +t^{2}\langle\mathcal{A}\star\mathcal{H},\mathcal{H}\rangle}{t}\\
		&=\langle2\mathcal{A}\star\mathcal{U},\mathcal{H}\rangle.
	\end{align*}
	Therefor the derivative of $h_{\mathcal{A}}$ with respect to $\mathcal{U}$ is equal to $2\mathcal{A} \star  \mathcal{U}$. To find the optimum, we set the derivatives of the Lagrange function equal to zero, so we obtain
 {
	\begin{equation}\label{equ33}
		\frac{\partial \phi\left(\mathcal{U},\mathcal{L}\right)}{\partial \mathcal{U}} = 2 \mathcal{A} \star \mathcal{U} - 2\mathcal{B} \star \mathcal{U} \star \mathcal{L} = 0 \\
		\iff \mathcal{A} \star \mathcal{U} = \mathcal{B} \star \mathcal{U} \star \mathcal{L}.
	\end{equation}
 }
        We know that the optimum verify ${\mathcal{U}}^T \star \mathcal{B} \star {\mathcal{U}}={\mathcal{I}_{d}}$ therefor $\mathcal{U}^T \star \mathcal{A} \star \mathcal{U}=\mathcal{L}$ and since $\mathcal{A}$ is {f-symmetric}, then $\mathcal{L}$ is {f-symmetric}. Let's consider the {f-diagonalization} of $\mathcal{L}$, i.e., $\mathcal{L}=\mathcal{P}\star \Lambda \star \mathcal{P}^T$, with $\Lambda\in \mathbb{R}^{d\times d\times n_3}$ is an f-diagonal tensor. Thus, Equation \eqref{equ33} becomes 
        \begin{equation}\label{generalized}
        \mathcal{A}\star \mathcal{U}\star \mathcal{P}=\mathcal{B}\star \mathcal{U}\star \mathcal{P}\star \Lambda.
        \end{equation}
        Consequently, to solve the problem \eqref{equ34}, we must go through the above tensor generalized eigenproblem.\\
        {The generalized eigenvalue problem \eqref{generalized} admits $d$ real eigentube problem, because its eigenvalue are those of $\Lambda$, and $\Lambda$ is f-symmetric (f-diagonal tensor), so by referring to \cite{el2023spectral}, it admits $n$ real eigentubes. Then the generalized eigenproblem admits $d$ real eigentubes}.\\
        If these eigentubes are labeled decreasingly as has been shown before, and if $\mathcal{V}=\left[\mathcal{V}_1, \ldots, \mathcal{V}_d\right] \in \mathbb{R}^{n \times d\times n_3} $ is the set of eigenslices associated with the first $d$ eigentube with $\mathcal{V}^T \star \mathcal{B} \star\mathcal{V}=\mathcal{I}_{d}$, then we have
        {
	\[
	\max _{\substack{\mathcal{U} \in \mathbb{R}^{n \times d\times n_3}\\\mathcal{U}^{T} \star \mathcal{B} \star \mathcal{U} =\mathcal{I}_{d} }} \operatorname{Trace}\left[\mathcal{U}^T \star \mathcal{A} 
	\star \mathcal{U}\right]=\operatorname{Trace}\left[\mathcal{V}^T \star \mathcal{A} \star \mathcal{V}\right]= \frac{1}{n_3}\sum_{i=1}^{n_3} \sum_{j=1}^{d} \widehat{\lambda}^{(i)}_{j},
	\]
 }
	where $\lambda_i={\tt ifft}\left(\widehat{\lambda}_{i},[\,],3\right)$ of size ${1\times 1\times n_3}$ is the first $j$-th eigentube of the generalised eigentube problem \eqref{generalized}.
 In the case of minimization, $\mathcal{V}$ is the $d$ eigenslices associated with the $d$ smallest non-zero eigentube of the generalized eigentube problem \eqref{equ33}.
\end{proof}

It is helpful to examine the 
$\operatorname{Trace}\left[\mathcal{V}^T \star\mathcal{B} \star \mathcal{V} \right]$ in detail. Let $\mathcal{B}=\mathcal{Q} \star \Lambda_{\mathcal{B}} \star \mathcal{Q}^T$ the {f-diagonalization} of $\mathcal{B}$, {where $\mathcal{Q}$ is f-orthogonal and $\Lambda_\mathcal{B}$ is a f-diagonal tensor contains the eigenslices and eigentubes of $\mathcal{B}$, respectively}. Let $ \vec{\mathcal{V}}_1, \ldots,  \vec{\mathcal{V}}_{d}$ be the lateral slices of $\mathcal{V}$, and define $ \vec{\mathcal{U}}_j=\mathcal{Q} \star  \vec{\mathcal{V}}_j$. Then we have
{
\begin{equation}\label{equ37}
	\operatorname{Trace}\left[\mathcal{V}^T \star \mathcal{B} \star \mathcal{V}\right]=\frac{1}{n_3}\sum_{k=1}^{n_3} \sum_{j=1}^d \sum_{i=1}^n \widehat{\lambda}_i^{(k)} \widehat{\vec{\mathcal{U}}}_{i j k}^2= \frac{1}{n_3}\sum_{k=1}^{n_3}  \sum_{i=1}^n \widehat{\lambda}_i^{(k)} \sum_{j=1}^d\left\vert \widehat{\Vec{\mathcal{U}}}_{i j k}\right\vert^2,
\end{equation}
}
where $\lambda_i={\tt ifft}\left(\widehat{\lambda}_{i},[\,],3\right)$ of size ${1\times 1\times n_3}$ is the largest $i$-th eigentube of $\mathcal{B}$.\\
{We can see that when $\mathcal{B}$ is f-symmetric positive definite, the quantity  Trace $\left[ \mathcal{V}^T \star \mathcal{B}\star \mathcal{V} \right]$ is non-vanishing.}\\
The following lemma examines under which conditions $\operatorname{Trace}\left[\mathcal{V}^T \star \mathcal{B} \star \mathcal{V}\right]$ is nonzero in the situation when $\mathcal{B}$ is positive semi-definite.
\begin{lemma}\label{lemma31}
	Assume that $\mathcal{B}$ is positive semi-definite and let $d$ be the number of lateral slices of $\mathcal{V}$. If $\mathcal{B}$ has at most $d-1$ zero eigentube then $\operatorname{Trace}\left[\mathcal{V}^T \star \mathcal{B} \star \mathcal{V} \right]$ is nonzero for any f-orthogonal $\mathcal{V}$.
\end{lemma}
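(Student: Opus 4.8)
The plan is to pass to the Fourier domain and exploit the nonnegativity of every summand already exhibited in \eqref{equ37}. Since $\mathcal{B}$ is f-symmetric and positive semi-definite, each frontal slice $\widehat{\mathcal{B}}^{(k)}$ is Hermitian positive semi-definite, so its eigenvalues $\widehat{\lambda}_i^{(k)}$ are real and nonnegative. With the tensor $\mathcal{U}$ built from $\mathcal{Q}$ and $\mathcal{V}$ as in the discussion preceding \eqref{equ37}, the f-orthogonality of $\mathcal{Q}$ together with $\mathcal{V}^T\star\mathcal{V}=\mathcal{I}_d$ gives $\mathcal{U}^T\star\mathcal{U}=\mathcal{I}_d$, so for each $k$ the matrix $\widehat{\mathcal{U}}^{(k)}\in\mathbb{C}^{n\times d}$ has orthonormal columns. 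Formula \eqref{equ37} then writes
\[
\operatorname{Trace}\left[\mathcal{V}^T\star\mathcal{B}\star\mathcal{V}\right]=\frac{1}{n_3}\sum_{k=1}^{n_3}\sum_{i=1}^n \widehat{\lambda}_i^{(k)}\sum_{j=1}^d\left\vert\widehat{\vec{\mathcal{U}}}_{ijk}\right\vert^2
\]
as a sum of nonnegative terms, so the quantity is automatically $\geq 0$ and the task reduces to ruling out equality with $0$.

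The next step is to characterize vanishing. Because every product is nonnegative, the trace equals zero precisely when, for each $k$ and each $i$ with $\widehat{\lambda}_i^{(k)}>0$, the $i$-th row of $\widehat{\mathcal{U}}^{(k)}$ vanishes; equivalently, each of the $d$ orthonormal columns of $\widehat{\mathcal{U}}^{(k)}$ is supported on the index set $\{i:\widehat{\lambda}_i^{(k)}=0\}$, i.e.\ lies in $\operatorname{Null}(\widehat{\mathcal{B}}^{(k)})$ after undoing the unitary change of basis $\widehat{\mathcal{Q}}^{(k)}$. Since $d$ orthonormal vectors are linearly independent, vanishing of the whole trace would force $\dim\operatorname{Null}(\widehat{\mathcal{B}}^{(k)})\geq d$ for \emph{every} $k=1,\dots,n_3$.

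I would then translate the eigentube hypothesis into these nullities. Using the decreasing-magnitude ordering $|\delta_{l,k}|\geq|\delta_{l+1,k}|$ of the eigenvalues of each slice, the zero eigenvalues of $\widehat{\mathcal{B}}^{(k)}$ always occupy the bottom positions $i>r_k$ with $r_k=\operatorname{rank}(\widehat{\mathcal{B}}^{(k)})$, so an ordered eigentube $\mu_i$ is identically zero exactly when $i>\max_k r_k$. Hence the number of zero eigentubes equals $n-\max_k r_k=\min_k\dim\operatorname{Null}(\widehat{\mathcal{B}}^{(k)})$, and the hypothesis of at most $d-1$ zero eigentubes reads $\min_k\dim\operatorname{Null}(\widehat{\mathcal{B}}^{(k)})\leq d-1$. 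Thus there is an index $k_0$ with $\dim\operatorname{Null}(\widehat{\mathcal{B}}^{(k_0)})\leq d-1<d$. Combining this with the previous paragraph gives the contradiction: if the trace were zero we would need $\dim\operatorname{Null}(\widehat{\mathcal{B}}^{(k_0)})\geq d$. Therefore the trace is strictly positive, in particular nonzero, for any f-orthogonal $\mathcal{V}$.

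The main obstacle I anticipate is the bookkeeping of the third step, namely correctly identifying the count of zero eigentubes with $\min_k\dim\operatorname{Null}(\widehat{\mathcal{B}}^{(k)})$, since this rests squarely on the magnitude-ordering convention adopted for eigentubes in \cite{el2023spectral}; once that identification is secured, the nonnegativity in step one and the orthonormality/dimension count in step two are routine.
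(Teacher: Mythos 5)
Your proof is correct and follows essentially the same route as the paper: both start from the expansion \eqref{equ37} after f-diagonalizing $\mathcal{B}$, use the nonnegativity of every summand together with the orthonormality of the columns of each $\widehat{\mathcal{U}}^{(k)}$, and close with a counting argument against the at-most-$(d-1)$ zero eigentubes. If anything, your slice-by-slice formulation (vanishing would force $\dim\operatorname{Null}(\widehat{\mathcal{B}}^{(k)})\geq d$ for every $k$, contradicting the existence of a slice with nullity at most $d-1$) is tighter than the paper's global pigeonhole, which does not explicitly match the frontal-slice index at which a nonzero eigentube and a nonzero row of $\widehat{\mathcal{U}}$ must coincide.
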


\begin{proof}
	Using the previous notation $\mathcal{U}=\left[\vec{\mathcal{U}}_1, \cdots, \vec{\mathcal{U}}_d\right]$, has at least one $d\times d\times n_3$ subtensor which is nonsingular, so it has at least $d$ lateral slices that have a nonzero norm. Then in the sum \eqref{equ37}, at least one of the $n-d+1$ nonzero eigentube $\widehat{\lambda}_i \not \equiv \mathbf{0}$ will coincide with one of these lateral slices norms, and this sum will be nonzero.
\end{proof}\\
Therefore, the problem is well-posed under the condition that the null space of $\mathcal{B}$ is of dimension less than $d$, i.e., that its tubal rank be at least $n-d+1$. In this case, the maximum is finite.

Another situation that leads to difficulties is when the two traces in the problem  \eqref{equ34} have a zero value for the same $\mathcal{V}$. This situation should be excluded from consideration as it leads to an indefinite ratio of $0 / 0$. For this we must assume that $\operatorname{Null}(\mathcal{A}) \cap \operatorname{Null}(\mathcal{B})=\{0\}$.

\begin{proposition}
	Let $\mathcal{A}, \mathcal{B}$ be two {f-symmetric} tensors and assume that $\mathcal{B}$ is semi-positive definite with tubal rank $>n-d$ and that $\operatorname{Null}(\mathcal{A}) \cap \operatorname{Null}(\mathcal{B})=\{0\}$. Then the ratio \eqref{equ34} admits a finite maximum (resp. minimum) value $\rho_*$. The maximum is reached for a certain $\mathcal{V}$ that is unique up to f-orthogonal transforms of the lateral slices.
\end{proposition}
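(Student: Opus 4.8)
The plan is to treat the three assertions—existence of a maximizer, finiteness of $\rho_*$, and uniqueness up to f-orthogonal transforms—separately, dispatching the first two by a compactness argument and reserving the real work for the third, which I would handle through an optimality characterization followed by a spectral analysis in the Fourier domain.

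First I would show that the constraint set $\mathcal{S}=\{\mathcal{V}\in\mathbb{R}^{n\times d\times n_3}:\mathcal{V}^T\star\mathcal{V}=\mathcal{I}_d\}$ is compact. Passing to the Fourier domain via \eqref{equ22}, the constraint reads $(\widehat{\mathcal{V}}^{(i)})^H\widehat{\mathcal{V}}^{(i)}=I_d$ for every $i=1,\dots,n_3$, so each frontal slice lies on a complex Stiefel manifold, which is closed and bounded; since the FFT is a linear homeomorphism, $\mathcal{S}$ is closed and bounded, hence compact. The maps $\mathcal{V}\mapsto\operatorname{Trace}[\mathcal{V}^T\star\mathcal{A}\star\mathcal{V}]$ and $\mathcal{V}\mapsto\operatorname{Trace}[\mathcal{V}^T\star\mathcal{B}\star\mathcal{V}]$ are polynomial, hence continuous, in the entries of $\mathcal{V}$. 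By Lemma~\ref{lemma31}, the hypothesis that $\mathcal{B}$ is positive semi-definite with tubal rank $>n-d$ (equivalently, at most $d-1$ zero eigentubes) forces the denominator to be strictly positive on $\mathcal{S}$; by compactness it attains a positive minimum, so it is bounded away from zero. Consequently the ratio \eqref{equ34} is continuous on the compact set $\mathcal{S}$ and attains a finite maximum $\rho_*$ (resp. minimum), proving the first two claims. The hypothesis $\operatorname{Null}(\mathcal{A})\cap\operatorname{Null}(\mathcal{B})=\{0\}$ serves only to exclude the indeterminate value $0/0$ on the constraint manifold.

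Next I would characterize a maximizer. Since the denominator is positive, $\mathcal{V}_*\in\mathcal{S}$ attains $\rho_*$ if and only if $\operatorname{Trace}[\mathcal{V}^T\star(\mathcal{A}-\rho_*\mathcal{B})\star\mathcal{V}]\le 0$ for all $\mathcal{V}\in\mathcal{S}$, with equality at $\mathcal{V}_*$; that is, $\mathcal{V}_*$ solves the shifted problem $\max_{\mathcal{V}^T\star\mathcal{V}=\mathcal{I}_d}\operatorname{Trace}[\mathcal{V}^T\star(\mathcal{A}-\rho_*\mathcal{B})\star\mathcal{V}]$ with optimal value $0$. Applying Theorem~\ref{theo1} with the f-symmetric tensor $\mathcal{A}-\rho_*\mathcal{B}$ and with $\mathcal{I}_n$ in place of the positive definite tensor, the maximizer is precisely the tensor whose lateral slices are the $d$ eigenslices of $\mathcal{A}-\rho_*\mathcal{B}$ associated with its $d$ largest eigentubes. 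For uniqueness I would first record the invariance: for any f-orthogonal $\mathcal{O}\in\mathbb{R}^{d\times d\times n_3}$ both traces are unchanged under $\mathcal{V}\mapsto\mathcal{V}\star\mathcal{O}$, because the cyclic property of the tensor trace (immediate slice-wise from Definition~\ref{def11} and \eqref{equ22}) gives $\operatorname{Trace}[(\mathcal{V}\star\mathcal{O})^T\star\mathcal{A}\star(\mathcal{V}\star\mathcal{O})]=\operatorname{Trace}[\mathcal{O}\star\mathcal{O}^T\star\mathcal{V}^T\star\mathcal{A}\star\mathcal{V}]=\operatorname{Trace}[\mathcal{V}^T\star\mathcal{A}\star\mathcal{V}]$, and likewise for $\mathcal{B}$, while $(\mathcal{V}\star\mathcal{O})^T\star(\mathcal{V}\star\mathcal{O})=\mathcal{I}_d$ preserves feasibility. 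Thus a maximizer is unique at best modulo such transforms. To show these are the only degrees of freedom, I would argue slice by slice: each $\widehat{\mathcal{V}}_*^{(i)}$ must span the invariant subspace of the Hermitian matrix $\widehat{\mathcal{A}}^{(i)}-\rho_*\widehat{\mathcal{B}}^{(i)}$ belonging to its $d$ largest eigenvalues, and when the $d$-th and $(d+1)$-th largest eigenvalues of each such slice are distinct this subspace is uniquely determined, so any two admissible $\widehat{\mathcal{V}}_*^{(i)}$ differ by a unitary $d\times d$ factor; reassembling across slices while respecting the conjugate symmetry that keeps $\mathcal{V}_*$ real yields a single f-orthogonal $\mathcal{O}$.

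The crux is exactly this last step. Existence and finiteness are a routine compactness argument resting entirely on Lemma~\ref{lemma31}, whereas uniqueness of the optimal t-subspace genuinely requires a separation between the $d$-th and $(d+1)$-th largest eigentubes of $\mathcal{A}-\rho_*\mathcal{B}$; absent such a gap, the maximizing subspace is not unique even modulo f-orthogonal transforms, so I would make this spectral-gap hypothesis explicit (or interpret the claim as uniqueness of the optimal t-subspace under genericity). A secondary technicality is checking that the per-slice unitary factors can be chosen consistently so that the assembled transform is a bona fide real f-orthogonal tensor.
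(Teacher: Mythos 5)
Your proof is correct and, for the existence and finiteness claims, follows the same route as the paper: closedness of the constraint set, continuity of the ratio, and Lemma~\ref{lemma31} to keep the denominator away from zero. You are however considerably more careful than the paper, which only asserts that the constraint set is closed and the function continuous and then stops; it never verifies boundedness (needed for compactness), never uses the hypothesis $\operatorname{Null}(\mathcal{A})\cap\operatorname{Null}(\mathcal{B})=\{0\}$ explicitly, and --- most importantly --- offers no argument whatsoever for the uniqueness claim. Your slice-wise Fourier-domain verification of compactness and your reduction of the uniqueness question to the invariant subspace of $\widehat{\mathcal{A}}^{(i)}-\rho_*\widehat{\mathcal{B}}^{(i)}$ for the $d$ largest eigenvalues supply exactly what the paper omits. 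Your observation that uniqueness up to f-orthogonal transforms genuinely fails without a gap between the $d$-th and $(d+1)$-th eigentubes of $\mathcal{A}-\rho_*\mathcal{B}$ is a real criticism of the statement as written: the proposition asserts uniqueness unconditionally, and neither the hypotheses nor the paper's one-line proof can deliver it. The remaining technicality you flag --- assembling the per-slice unitaries into a single real f-orthogonal tensor respecting the conjugate-symmetry constraint $\operatorname{conj}(\widehat{\mathcal{V}}^{(i)})=\widehat{\mathcal{V}}^{(n_3-i+2)}$ --- is also a genuine point that would need to be checked in a full write-up, but it does not affect the soundness of your overall argument.
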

\begin{proof}
	The set of tensors $\mathcal{V}$ such that $\mathcal{V}^T \star \mathcal{V}=\mathcal{I}_{d}$ is closed and, under the assumptions, the ratio trace function in the right-hand side of \eqref{equ34} is a continuous function of its argument. Therefore, using Lemma \ref{lemma31} the maximum of the trace ratio \eqref{equ34} is reached.
\end{proof}
\subsection{ Necessary conditions for optimality} In this section, we search for the necessary conditions for optimality for the optimization problem \eqref{equ34}. Assume we have the conditions mentioned in Lemma \ref{lemma31} and Proposition \ref{pro1} on $\mathcal{A}$ and $\mathcal{B}$.
Therefore, the problem \eqref{equ34} admits a maximum,  and the corresponding Lagrangian function can write as,
\begin{equation}
	L(\mathcal{V}, \Gamma)=\frac{\operatorname{Trace}\left[\mathcal{V}^T \star \mathcal{A} \star \mathcal{V}\right]}{\operatorname{Trace}\left[\mathcal{V}^T \star \mathcal{B} \star \mathcal{V}\right]}-\langle\Gamma,\mathcal{V}^T \star \mathcal{V}-\mathcal{I}_{d}\rangle ,
\end{equation}
where $ \Gamma $ represents the Lagrangian tensor.
Based on the Karush-Kuhn-Tucker (KKT) optimality conditions, as \eqref{equ34} has a global maximum $\mathcal{V}_*$, there exists a Lagrangian multiplier tensor $\Gamma_*$, satisfying the following conditions
\[
\frac{\partial L\left(\mathcal{V}_*, \Gamma_*\right)}{\partial \mathcal{V}}=0 \quad \text { with } \quad \mathcal{V}_*^T \star \mathcal{V}_*=\mathcal{I}_{d}.
\]
We have the derivative of $L$ with respect to $\mathcal{V}$
\[
\frac{\partial L(\mathcal{V}, \Gamma)}{\partial \mathcal{V}}=\frac{2 \operatorname{Trace}\left[\mathcal{V}^T \star  \mathcal{B} \star \mathcal{V}\right] \mathcal{A} \star \mathcal{V}-2 \operatorname{Trace}\left[\mathcal{V} \star \mathcal{A} \star \mathcal{V}\right] \mathcal{B} \star \mathcal{V}}{\left(\operatorname{Trace}\left[\mathcal{V}^T \star \mathcal{B} \star \mathcal{V}\right]\right)^2}-\mathcal{V} \star \left(\Gamma^T+\Gamma\right).
\]

Hence, the optimal solutions $\mathcal{V}_*$ and $\Gamma_*$ verifies
\begin{equation}\label{eq3.9}
	\left(\mathcal{A}-\rho_* \mathcal{B}\right) \star \mathcal{V}_*=\frac{\operatorname{Trace}\left[\mathcal{V_*}^T \star \mathcal{B} \star \mathcal{V_*}\right]}{2} \mathcal{V}_* \star\left(\Gamma_*^T+\Gamma_*\right), 
\end{equation}
where $\rho_* = \operatorname{Trace}\left[\mathcal{V_*}^T \star \mathcal{A}  \star \mathcal{V_*}\right] / \operatorname{Trace}\left[\mathcal{V_*}^T  \star\mathcal{B} \star \mathcal{V_*}\right] .$ \\
Since the tensor $\left(\Gamma_*^T+\Gamma_*\right)$ is f-symmetric, it is f-diagonalizable. Let $ \mathcal{Q}$ be the tensor which f- diagonalizes $\Gamma_*^T+\Gamma_*$
\[
\Gamma_*^T+\Gamma_*=\mathcal{Q} \star  \Sigma_* \star \mathcal{Q}^T, \quad \mathcal{Q}^T \star \mathcal{Q}=\mathcal{I}_{d}.
\]
Define $\mathcal{U}_*=\mathcal{V}_* \star \mathcal{Q}$. We have $\mathcal{U}_*^T \star \mathcal{U}_*=\mathcal{I}_{d}$ and we can rewrite \eqref{eq3.9} as
\begin{equation}\label{equ10}
	\left(\mathcal{A}-\rho_* \mathcal{B}\right)  \star \mathcal{U}_*=\mathcal{U}_* \star \Lambda_*, \quad \text { where } \quad \Lambda_*=\frac{\operatorname{Trace}\left[\mathcal{V_*}^T \star \mathcal{B} \star \mathcal{V_*}\right]}{2} \Sigma_* .
\end{equation}
Equation \eqref{equ10} is the necessary condition of the pair $\rho_*, \mathcal{U}_*$ for the problem \eqref{equ34}.

\subsection{Newton-QR algorithm} We begin with the understanding that a maximum value, denoted by $\rho_*$, is attained for a specific (although not unique) f-orthogonal tensor, represented as $\mathcal{V}_*$. Consequently, for any f-orthogonal $\mathcal{V}$, the following inequality holds
\begin{equation}
	\dfrac{\operatorname{Trace}\left[\mathcal{V}^T \star \mathcal{A} \star \mathcal{V}\right]}{\operatorname{Trace}\left[\mathcal{V}^T \star\mathcal{B} \star \mathcal{V}\right]} \leq \rho_*.   
\end{equation}
This leads to the expression

\[
\operatorname{Trace}\left[\mathcal{V}^T \star \mathcal{A} \star \mathcal{V}\right] - \rho_*\operatorname{Trace}\left[\mathcal{V}^T \star \mathcal{B} \star \mathcal{V}\right] \leq 0.
\]
Then we have 
\[
\operatorname{Trace}\left[\mathcal{V_*}^T \star (\mathcal{A}  - \rho_*\mathcal{B})  \star \mathcal{V_*}\right] = 0.
\]
Therefore, we have the following necessary condition for $\rho_*,\ \mathcal{V}_*$ to be optimal
\begin{equation}
	\max _{\mathcal{V}^T \star \mathcal{V}=\mathcal{I}_{d}} \operatorname{Trace}\left[\mathcal{V}^T \star  \left(\mathcal{A}-\rho_* \mathcal{B}\right)  \star\mathcal{V}\right]=\operatorname{Trace}\left[\mathcal{V}_*^T \star \left(\mathcal{A}-\rho_* \mathcal{B}\right) \star \mathcal{V}_*\right]=0.
\end{equation}
According to Theorem \ref{theo1}, we can determine the solution of the problem
\begin{equation}\label{3.13}
	\underset{{\mathcal{V}}}{\arg \max } \operatorname{Trace}\left[{\mathcal{V}}^T \star (\mathcal{A}-\rho \mathcal{B}) \star {\mathcal{V}}\right] \quad \text { subject to } {\mathcal{V}}^T \star  {\mathcal{V}}={\mathcal{I}_{d}}.
\end{equation}
Consider the function
\[
f(\rho)=\max _{\mathcal{V}^T  \star\mathcal{V}=\mathcal{I}_{d}} \operatorname{Trace}\left[\mathcal{V}^T  \star(\mathcal{A}-\rho \mathcal{B}) \star \mathcal{V}\right].
\]
The tensor $ \mathcal{V} $ that reaches the maximum of \eqref{3.13} is not unique because changing the lateral slices of $\mathcal{V}$ with an f-orthogonal transformation does not affect the trace. To select the optimal $\mathcal{V}$, we use Theorem \ref{theo1}, the optimal is the set of eigenslices of the tensor $\mathcal{A} - \rho \mathcal{B}$. We will denote the set of eigenslices  that attain the specified maximum as $\mathcal{V}(\rho)$.\\
Therefore, $f$ can be written as
\[
f(\rho)=\operatorname{Trace}\left[\mathcal{V(\rho)}^T \star \left(\mathcal{A}-\rho \mathcal{B}\right)  \star \mathcal{V(\rho)}\right], \text{\ with\ } \mathcal{V}(\rho)^T \star \mathcal{V}(\rho)=\mathcal{I}_{d}.
\]
Also from Theorem \ref{theo1}, $\mathcal{V}(\rho)$ diagonalizes $\mathcal{A}-\rho \mathcal{B}$ and verifies
\[
(\mathcal{A}-\rho \mathcal{B}) \star \mathcal{V}(\rho)=\mathcal{V}(\rho) \star \mathcal{D}(\rho),
\]
where $\mathcal{D}(\rho)$ is a f-diagonal tensor of size $d \times d \times n_3$.\\
From the equality $ \mathcal{V}(\rho)^T  \star  \mathcal{V}(\rho)=\mathcal{I}_{d}$, we can write
\[
\begin{aligned}
	\frac{d}{d \rho}\left[ \mathcal{V}(\rho)^T \star  \mathcal{V}(\rho)\right]&=\frac{d  \mathcal{V}(\rho)^T}{d \rho}\star  \mathcal{V}(\rho)+ \mathcal{V}(\rho)^T \star \frac{d  \mathcal{V}(\rho)}{d \rho}=0\\
	&\Rightarrow \frac{d  \mathcal{V}(\rho)^T}{d \rho}\star  \mathcal{V}(\rho)=- \mathcal{V}(\rho)^T \star \frac{d  \mathcal{V}(\rho)}{d \rho}  \\
	&\Rightarrow   \operatorname{Diag}\left[ \mathcal{V}(\rho)^T \star \frac{d  \mathcal{V}(\rho)}{d \rho}\right]=\mathbf{0}.
\end{aligned}
\]
Because the tensor $\dfrac{d  \mathcal{V}(\rho)^T}{d \rho} \star  \mathcal{V}(\rho)$ is anti-symmetry.\\
\begin{remark}
  A tensor $\mathcal{A}$ is anti-symmetric if $\mathcal{A} = -\mathcal{A}^T$, and $\operatorname{Diag}(\mathcal{A})=\mathbf{0}$ if the f-diagonal of each frontal slice $\mathcal{A}^{(i)}$ is equal to $0$.  
\end{remark}

Our objective is to calculate the derivative of the function $f(\rho)$. Determining the derivative of $f(\rho)$ is essential for deriving this particular expression
\begin{equation*}
	\begin{aligned}
		&\frac{d}{d \rho}\left[ \mathcal{V}(\rho)^T \star (\mathcal{A}-\rho \mathcal{B}) \star \mathcal{V}(\rho)\right] = \frac{d}{d \rho}\left[ \mathcal{V}(\rho)^T \star \mathcal{A} \star \mathcal{V}(\rho)\right] - \frac{d}{d \rho}\left[ \mathcal{V}(\rho)^T \star \rho \mathcal{B} \star \mathcal{V}(\rho)\right] \\
		& = \frac{d \mathcal{V}(\rho)^T}{d \rho} \star \mathcal{A} \star \mathcal{V}(\rho) + \mathcal{V}(\rho)^T \star \mathcal{A} \star \frac{d \mathcal{V}(\rho)}{d \rho} - \frac{d \mathcal{V}(\rho)^T}{d \rho} \star \rho \mathcal{B} \star \mathcal{V}(\rho) \\
            &- \mathcal{V}(\rho)^T \star \left[ \mathcal{B} \star \mathcal{V}(\rho) + \rho \mathcal{B} \star \frac{d \mathcal{V}(\rho)}{d \rho} \right] \\
		& = \frac{d \mathcal{V}(\rho)^T}{d \rho} \star [\mathcal{A} - \rho \mathcal{B}] \star \mathcal{V}(\rho) + \mathcal{V}(\rho)^T \star [\mathcal{A} - \rho \mathcal{B}] \star \frac{d \mathcal{V}(\rho)}{d \rho} - \mathcal{V}(\rho)^T \star \mathcal{B} \star \mathcal{V}(\rho) \\
		& = \frac{d \mathcal{V}(\rho)^T}{d \rho} \star \mathcal{V}(\rho) \star \mathcal{D}(\rho) + \mathcal{D}(\rho) \star \mathcal{V}(\rho)^T \star \frac{d \mathcal{V}(\rho)}{d \rho} - \mathcal{V}(\rho)^T \star \mathcal{B} \star \mathcal{V}(\rho) .
	\end{aligned}
\end{equation*}
Finally, we can express the final form of the derivative of $f(\rho)$.
\[
\begin{aligned}
	f^{'}(\rho) & =\operatorname{Trace}\left[\frac{d  \mathcal{V}(\rho)^T}{d \rho} \star  \mathcal{V}(\rho) \star \mathcal{D}(\rho)+\mathcal{D}(\rho)  \star \mathcal{V}(\rho)^T \star \frac{d  \mathcal{V}(\rho)}{d \rho}- \mathcal{V}(\rho)^T \star \mathcal{B} \star  \mathcal{V}(\rho)\right] \\
	& =2 \operatorname{Trace}\left[\{\mathcal{D}(\rho) \star  \mathcal{V}(\rho)^T \star \frac{d  \mathcal{V}(\rho)}{d \rho}\right]-\operatorname{Trace}\left[ \mathcal{V}(\rho)^T \star \mathcal{B}\star   \mathcal{V}(\rho)\right] \\
	& =-\operatorname{Trace}\left[ \mathcal{V}(\rho)^T \star \mathcal{B} \star   \mathcal{V}(\rho)\right].
\end{aligned}
\]
We aim to find a solution to the equation where $f(\rho) = 0$, and then identify $\mathcal{V}(\rho)$, to do this, we introduce Newton-QR algorithm. 
Using the expression of the differential of $f$, the form of Newton's method is as follows
\[
\begin{aligned}
	\rho_{new}  &= \rho - \frac{f(\rho)}{f^{'}(\rho)}\\
	& = \rho + \frac{\operatorname{Trace}\left[\mathcal{V(\rho)}^T \star \left(\mathcal{A}-\rho \mathcal{B}\right)  \star\mathcal{V(\rho)}\right]}{\operatorname{Trace}\left[\mathcal{V(\rho)}^T \star \mathcal{B} \star \mathcal{V(\rho)}\right]}\\
	&= \frac{\operatorname{Trace}\left[\mathcal{V(\rho)}^T  \star\left(\mathcal{A}\right) \star \mathcal{V(\rho)}\right]}{\operatorname{Trace}\left[\mathcal{V(\rho)}^T  \star\mathcal{B} \star \mathcal{V(\rho)}\right]}.  
\end{aligned}
\]
In the tensor Newton-QR algorithm, we use the tensor-QR algorithm (t-QR Algorithm) for tensors to compute the eigenslices associated with the dominant eigentubes of the tensor $\mathcal{A}-\rho\mathcal{B}$. For more details about the t-QR Algorithm see \cite{el2023spectral}.
\begin{algorithm}[H]
	\caption{Tensor Newton–QR algorithm}
	\label{alg:tensor_newton_qr}
	\begin{algorithmic}
		\STATE{\textbf{Input:} $\mathcal{A} \in \mathbb{R}^{n \times n \times n_3}, \ \mathcal{B} \in \mathbb{R}^{n \times n \times n_3},$}
		\STATE{ $d$: Dimension of projective space,}
		\STATE{$\epsilon$: Tolerance,}
		\STATE{M: Max iteration,}
		\STATE{$\mathcal{V}_0 \in \mathbb{R}^{n \times d \times n_3}$: Initial tensor,}
		\STATE{ Compute $\rho = \frac{\operatorname{Trace}[\mathcal{V}_0^T \star \mathcal{A} \star \mathcal{V}_0]}{\operatorname{Trace}[\mathcal{V}_0^T \star \mathcal{B} \star \mathcal{V}_0]}$,}\\
		\FOR{ $i = 1$ to $M$}
		\STATE{ $[\mathcal{V}(\rho)_d, \Lambda(\rho)_d] \leftarrow$ Select the $d$ eigenslices associated to $d$ largest eigentubes of $(\mathcal{A}-\rho \mathcal{B})$ using Tensor QR algorithm,} 
		\STATE{$\rho_{\text{new}} = \frac{\operatorname{Trace}[\mathcal{V}(\rho)_d^T \star \mathcal{A} \star \mathcal{V}(\rho)_d]}{\operatorname{Trace}[\mathcal{V}(\rho)_d^T \star \mathcal{B} \star \mathcal{V}(\rho)_d]}$,}
		\IF{$|\rho - \rho_{\text{new}}| \leq \epsilon$}
		\STATE{break,}
		\ENDIF
		\STATE{$\rho = \rho_{\text{new}}$,}
		\ENDFOR
		\RETURN {$\mathcal{V}(\rho)_d$,}
	\end{algorithmic}
        \label{algo4}
\end{algorithm}

\section{Trace-Ratio tensor methods}\label{s4} In this section, we introduce trace ratio methods for tensor dimensionality reduction. Firstly, we introduce the concept of graphs and their relation with tensors.
\subsection{Graphs and Multidimensional data}\label{s41} In this part, we introduce the concepts of graphs and their relation with high-order data. Additionally, we  demonstrate how to compute the affinity tensor \cite{henaff2015deep}.

Consider the third-order data represented by the tensor $\mathcal{X} \in \mathbb{R}^{n_1 \times n_2 \times n_3}$. This data contains $n_1$ samples $\left\{\left({\mathcal{X}_i}, y_i\right)\right\}_{i=1}^{n_1}$, where $\mathcal{X}_i \in \mathbb{R}^{1 \times n_2 \times n_3}$, and $y_i \in\{1,2, \cdots, c\}$, $c$ is the number of classes. 
Let ${G}_{r}=({C}_{r},{E}_{r})$ and ${G}_{r}^{'}=({C}^{'}_{r},{E}^{'}_{r})$ be two types of graphs both over the $r$-th frontal slice of $ \widehat{\mathcal{X}}={\tt fft}\left(\mathcal{X},[\,], 3\right)$.\\
To construct the graphs ${G}_{r}$, we consider each pair of points $\widehat{\mathcal{X}}^{(r)}_i$ and $\widehat{\mathcal{X}}^{(r)}_j$ from the same class, i.e., $y_i = y_j$ for the $r$-th frontal slice of $\widehat{\mathcal{X}}$. We link nodes ${C}_{r_i}$ and ${C}_{r_j}$ if $\widehat{\mathcal{X}}^{(r)}_i$ and $\widehat{\mathcal{X}}^{(r)}_j$ are close.\\
There are two variations
\begin{enumerate}
	\item $\epsilon$-neighborhood: connect ${C}_{r_i}$ and ${C}_{r_j}$ by an edge if $\left\|\widehat{\mathcal{X}}^{(r)}_{i}-\widehat{\mathcal{X}}^{(r)}_{j}\right\|_{F}^{2} \leq \epsilon$.
	\item $k$-nearest neighbors: connect ${C}_{r_i}$ and ${C}_{r_j}$ by an edge if $\widehat{\mathcal{X}}^{(r)}_i$ is among the $k$-nearest neighbors of $\widehat{\mathcal{X}}^{(r)}_j$ or $\widehat{\mathcal{X}}^{(r)}_j$ is among the $k$-nearest neighbors of $\widehat{\mathcal{X}}^{(r)}_i$.
\end{enumerate}
To construct the graphs ${G}^{'}_{r}$, we consider each pair of points $\widehat{\mathcal{X}}^{(r)}_i$ and $\widehat{\mathcal{X}}^{(r)}_j$ from different class, i.e. $y_i \neq y_j$ for the $r$-th frontal slice of $\widehat{\mathcal{X}}$. We link nodes ${C}^{'}_{r_i}$ and ${C}^{'}_{r_j}$ if $\widehat{\mathcal{X}}^{(r)}_i$ and $\widehat{\mathcal{X}}^{(r)}_j$ are close.
\subsubsection{Definition of the weights}
To define the affinity tensor $\widehat{\mathcal{W}} \in \mathbb{C}^{n_1 \times n_1 \times n_3}$ of ${G}_{r}$. There are two variations
\begin{enumerate}
	\item Heat kernel (parameter $t \in \mathbb{R}$ ): If nodes ${C}_{r_i}$ and ${C}_{r_j}$ are connected, put
	\[
	\widehat{\mathcal{W}}_{i j  r}=e^{-\dfrac{\left\|\widehat{\mathcal{X}}_i^{(r)}-\widehat{\mathcal{X}}_j^{(r)}\right\|_{F}^2}{t}} ;\  r = 1\  \ldots,\  n_3,\ i,j= 1\  \ldots,\  n_1.
	\]
	otherwise, put $\widehat{\mathcal{W}}_{i j r}=0$.
	\item Simple-minded (no parameters $(t=\infty)$ ):
	\[
	\widehat{\mathcal{W}}_{i j r}= \begin{cases} 1 & \text { if nodes ${C}_{r_i}$ and ${C}_{r_j}$ are connected by an edge, }  \\ 
     0 & \text { otherwise. }\end{cases}
	\]
	This simplification avoids the need to choose $t$.
\end{enumerate}
We use the same variations to calculate the affinity weight $\widehat{\mathcal{W}}^{'}$ of the graphs ${G}_{r}^{'}$.
\\
Note that the tensors ${\mathcal{W}}$ and ${\mathcal{W}^{'}}$ are f-symmetric.

\subsection{Multilinear Local Discriminant Embedding} This method (MLDE) is a supervised dimensionality reduction algorithm, which requires as inputs a data tensor $\mathcal{X} \in \mathbb{R}^{n_1 \times n_2\times n_3} $, where $n_2$ represents the number of data points, and each sample represented by a third-order tensor $\left\{\mathcal{X}_{i} \in \mathbb{R}^{n_1 \times 1 \times n_3 }\right\}$, the desired number of dimensions $d<n_1$, integers $k_1$, $k_2$ for finding local neighborhoods, and the output is a tensor $\mathcal{V} \in \mathbb{R}^{n_1 \times d\times n_3 }$, then the reduced data is obtained by $\mathcal{Y} =\mathcal{V}^{\top} \star \mathcal{X} \in \mathbb{R}^{d \times n_2 \times n_3 } $. This method can be divided into three main steps; construct the neighborhood graphs, compute the affinity weights, and complete the embedding. The key idea behind the third step of the MLDE algorithm is to minimize the distance between the neighboring points of the same class and at the same time maximize the distance between the neighboring points of different classes of each frontal slice in Fourier domain. By considering these two aspects, we get  the following optimization problem, for $ r = 1, \ldots, n_3$
\begin{equation}\label{equ4.1}
        \underset{\widehat{\mathcal{V}}^{(r)} \in \mathbb{C}^{n_1 \times d \times 1}}{\arg \max} \frac{\displaystyle\sum_{i,j=1}^{n_2} \widehat{\mathcal{W}}_{ijr}^{\prime} \left\| \widehat{\mathcal{V}}^{(r)\top} \widehat{\mathcal{X}}^{(r)}_{i} -  \widehat{\mathcal{V}}^{(r)\top} \widehat{\mathcal{X}}^{(r)}_{j} \right\|_{F}^{2} }{  \displaystyle\sum_{i,j=1}^{n_2} \widehat{\mathcal{W}}_{ijr}\left\| \widehat{\mathcal{V}}^{(r)\top} \widehat{\mathcal{X}}^{(r)}_{i} -\widehat{\mathcal{V}}^{(r)\top} \widehat{\mathcal{X}}^{(r)}_{j} \right\|_{F}^{2} }, \text{ subject to } \widehat{\mathcal{V}}^{(r)\top}\widehat{\mathcal{V}}^{(r)}=\widehat{\mathcal{I}}^{(r)}_{d},
\end{equation}
where $\widehat{\mathcal{W}}_{i j l}$ and $\widehat{\mathcal{W}}_{i j l}^{\prime}$ are the elements of the affinity tensors. We compute these affinity tensors using the notions described in Section \ref{s41}.\\
To gain more insight into \eqref{equ4.1}, we write the square of the norm in the form of a trace
\[
\begin{aligned}
        f_r(\widehat{\mathcal{V}}^{(r)}) &= \sum_{i, j =1}^{n_2} \left\| \widehat{\mathcal{V}}^{(r)\top} \widehat{\mathcal{X}}_{i}^{(r)} - \widehat{\mathcal{V}}^{(r)\top} \widehat{\mathcal{X}}_{j}^{(r)} \right\|_{F}^{2} \widehat{\mathcal{W}}_{i j r}^{\prime} \\
        &= \displaystyle\sum_{i,j = 1}^{n_2} \operatorname{Trace}\left( \left( \widehat{\mathcal{V}}^{(r)\top} \widehat{\mathcal{X}}_{i}^{(r)} - \widehat{\mathcal{V}}^{(r)\top} \widehat{\mathcal{X}}_{j}^{(r)} \right) \left( \widehat{\mathcal{V}}^{(r)\top} \widehat{\mathcal{X}}_{i}^{(r)} - \widehat{\mathcal{V}}^{(r)\top} \widehat{\mathcal{X}}_{j}^{(r)} \right)^{\top} \right) \widehat{\mathcal{W}}_{i j r}^{\prime} \\
        &= \operatorname{Trace}\left( \widehat{\mathcal{V}}^{(r)\top} \displaystyle\sum_{i, j = 1}^{n_2} \left( \widehat{\mathcal{X}}_{i}^{(r)} - \widehat{\mathcal{X}}_{j}^{(r)} \right) \left( \widehat{\mathcal{X}}_{i}^{(r)} - \widehat{\mathcal{X}}_{j}^{(r)} \right)^{\top} \widehat{\mathcal{W}}_{i j r}^{\prime} \widehat{\mathcal{V}}^{(r)} \right) \\
        &= \operatorname{Trace}\left( \widehat{\mathcal{V}}^{(r)\top} \left( 2 \widehat{\mathcal{X}}^{(r)} \widehat{\mathcal{D}}^{(r)\prime} \widehat{\mathcal{X}}^{(r)\top} - 2 \widehat{\mathcal{X}}^{(r)} \widehat{\mathcal{W}}^{(r)\prime} \widehat{\mathcal{X}}^{(r)\top} \right) \widehat{\mathcal{V}}^{(r)} \right) \\
        &= 2 \operatorname{Trace}\left( \widehat{\mathcal{V}}^{(r)\top} \widehat{\mathcal{X}}^{(r)} \left( \widehat{\mathcal{D}}^{(r)\prime} - \widehat{\mathcal{W}}^{(r)\prime} \right) \widehat{\mathcal{X}}^{(r)\top} \widehat{\mathcal{V}}^{(r)} \right),
\end{aligned}
\]

where $\widehat{\mathcal{X}} = \texttt{fft}(\mathcal{X},[\,],3)$, with $\mathcal{X}$ is the data tensor, and $\widehat{\mathcal{D}}^{\prime}$ is a f-diagonal tensor with $\widehat{\mathcal{D}}^{\prime}_{i i l} = \displaystyle\sum_{j =1}^{n_2} \widehat{\mathcal{W}}^{\prime}_{i j l}$. 
We set
\[
F({\mathcal{V}}) = \sum_{i=1}^{n_3} f_r\left(\widehat{\mathcal{V}}^{(r)}\right),
\]
then, by using tensor notation and the definition of trace, we can write $F$ as 
    \[
    F(\mathcal{V}) = 
 2 n_3 \operatorname{Trace}\left(\mathcal{V}^{\top} \star  \mathcal{X} \star \left( \mathcal{D}^{\prime} - \mathcal{W}^{\prime} \right) \star \mathcal{X}^{\top} \star \mathcal{V}\right).
    \]

For the denominator of the objective function in \eqref{equ4.1}, we use the same analogy as before

    \[
    H(\mathcal{V}) = 2 n_3 \operatorname{Trace}\left(\mathcal{V}^{\top} \star  \mathcal{X} \star \left( \mathcal{D} - \mathcal{W} \right) \star \mathcal{X}^{\top} \star \mathcal{V}\right).
    \]

Then, the problem \eqref{equ4.1} is equivalent to the following trace ratio tensor problem
\begin{equation}
	\begin{gathered}
		\underset{\mathcal{V}\in \mathbb{R}^{n_2 \times d \times n_3 } }{\arg \max } \frac{\operatorname{Trace}\left(\mathcal{V}^T  \star \mathcal{X} \star \left( \mathcal{D}^{\prime} - \mathcal{W}^{\prime} \right) \star \mathcal{X}^{\top}  \star\mathcal{V}\right) }{\operatorname{Trace}\left(\mathcal{V}^{\top} \star  \mathcal{X} \star \left( \mathcal{D} - \mathcal{W} \right) \star \mathcal{X}^{\top} \star \mathcal{V}\right) }, \text{ subject to } \mathcal{V}^{\top}\star\mathcal{V} = \mathcal{I}_{d}.
	\end{gathered}
\end{equation}
This optimization problem can be simplified as
\begin{equation}\label{equ4.3}
	\begin{gathered}
		\underset{\mathcal{V} \in \mathbb{R}^{n_2 \times d\times n_3 } }{\arg \max } \frac{\operatorname{Trace}\left( \mathcal{V}^{\top}  \star\mathcal{L^{\prime}} \star \mathcal{V} \right) }{\operatorname{Trace}\left( \mathcal{V}^{\top} \star \mathcal{L} \star \mathcal{V} \right)}, \text{ subject to } \mathcal{V}^{\top}\star\mathcal{V} = \mathcal{I}_{d},\\
	\end{gathered}
\end{equation}
{
Consider the expressions $\mathcal{L}^{'} = \mathcal{X} \star (\mathcal{D}^{\prime} - \mathcal{W}^{\prime}) \star \mathcal{X}^{\top}$ and $\mathcal{L} = \mathcal{X} \star (\mathcal{D} - \mathcal{W}) \star \mathcal{X}^{\top}$. Here, both $\mathcal{D}^{\prime} - \mathcal{W}^{\prime}$ and $\mathcal{D} - \mathcal{W}$ represent Laplacian tensors.\\
We have the tensor $\mathcal{L}^{'}$ f-symmetric, because $ \mathcal{D}^{\prime} - \mathcal{W}^{\prime} $ is f-symmetric then $ \mathcal{X} \star (\mathcal{D}^{\prime} - \mathcal{W}^{\prime}) \star \mathcal{X}^{\top}$ is f-symmetric,  and the tensor $\mathcal{D} - \mathcal{W}$ is the Laplacian tensor then is f-symmetric positive semi-definite then the tensor $\mathcal{L} =  \mathcal{X} \star \left( \mathcal{D} - \mathcal{W} \right) \star \mathcal{X}^{\top}$ is a f-symmetric 
positive semi-define tensor. We examine this type of problem in Section \ref{s3}, we use the Newton-QR Tensor Algorithm \ref{algo4} to get the solution for this trace ratio problem.}
Algorithm \ref{MLDE} shows a summary of the Multilinear Local Discriminant Embedding Algorithm method.
\begin{algorithm}[h]
	\caption{Multilinear Local Discriminant Embedding Algorithm}
	\label{MLDE}
		{\textbf{Input:} $\mathcal{X} \in \mathbb{R}^{n_1 \times n_2 \times n_3}, $ input data: third-order tensor.}\\
		\hspace*{1.2 cm}{ $Y$: labels: $c$ classes.}\\
		\hspace*{1.3 cm}{$d$: reduced dimension.}\\
		\hspace*{1.3 cm}{$k_1,\ k_2$: number of neighborhoods.}\\
        \hspace*{1.3 cm}{$\epsilon$: Tolerance.}\\
		\hspace*{1.25 cm}{$M$: Max iteration.}\\
		\textbf{Initialize: }{$\mathcal{V}_0 \in \mathbb{R}^{n_1 \times d \times n_3}$.}\\
		{\textbf{Output:} $\mathcal{V}\in \mathbb{R}^{n_1 \times d \times n_3}$,}
		\begin{algorithmic}[1]
		\STATE{Compute $\widehat{\mathcal{X}}={\tt fft}(\mathcal{X},[\,], 3)$,}
		\FOR{$i=1, \ldots,n_3$}
            \STATE{ From $ \widehat{\mathcal{X}}^{(i)} $ and $Y$ build two graphs $G_r$ and $G_r^{'}$. see sub-section \ref{s41} }
  		\STATE{$\widehat{\mathcal{D}}^{(i)} - \widehat{\mathcal{W}}^{(i)}, \widehat{\mathcal{D}}^{'(i)} -\widehat{\mathcal{W}}^{'(i)}\leftarrow$ Buildlaplacian  $\left(\widehat{\mathcal{X}}^{(i)}, Y, k_1, k_2\right)$,}
		\ENDFOR
  \STATE{ ${\mathcal{D}}={\tt ifft}(\widehat{\mathcal{D}},[\,], 3)$, \  ${\mathcal{W}}={\tt ifft}(\widehat{\mathcal{W}},[\,], 3)$, }

 \STATE{ ${\mathcal{D}}^{\prime}={\tt ifft}(\widehat{\mathcal{D}}^{\prime},[\,], 3)$, \  ${\mathcal{W}}^{\prime}={\tt ifft}(\widehat{\mathcal{W}}^{\prime},[\,], 3)$, }
  
        \STATE{ $\mathcal{L} =  \mathcal{X} \star (\mathcal{D} - \mathcal{W}) \star \mathcal{X}^{\top} $,\\
        $\mathcal{L}^{\prime} =  \mathcal{X} \star (\mathcal{D}^{\prime} - \mathcal{W}^{\prime}) \star \mathcal{X}^{\top} $ ,}
        \STATE{$\mathcal{V} \leftarrow $ Tensor Newton–QR algorithm $\left(  \mathcal{L}^{\prime}, \mathcal{L}, d, \epsilon, M  , \mathcal{V}_{0}\right)$, \ see Algorithm \ref{alg:tensor_newton_qr}.}
	\end{algorithmic}
\end{algorithm}

\subsection{Multilinear Laplacian Eigenmaps} This method (MLE) is an unsupervised non-linear dimensionality reduction algorithm, requires as inputs, data tensor $\mathcal{X} \in \mathbb{R}^{n_1\times n_2 \times n_3} $, with each sample represented by a third-order tensor $\left\{\mathcal{X}_{i} \in \mathbb{R}^{1 \times n_{2} \times n_{3}}, i=1, \ldots, n_{1}\right\}$, a dimension $d<n_2$ and integer $k$ for finding local neighborhoods. The output is $\mathcal{Y} \in \mathbb{R}^{n_{1} \times d \times n_{3}}$. The key idea behind this method is to minimize the distance between the neighboring points in low dimensional space for each frontal slice in Fourier domain. With this in mind, we minimize the following function. 
\begin{equation}\label{equ4.4}
	g_{r}(\widehat{\mathcal{Y}}^{(r)})=\frac{1}{2} \sum_{i=1}^{n_1} \sum_{j=1}^{n_1} \widehat{\mathcal{W}}_{i j l}\left\|\widehat{\mathcal{Y}}^{(r)}_{i}-\widehat{\mathcal{Y}}_{j}^{(r)}\right\|^2_{F},\   r = 1,2, \ldots ,n_3,  
\end{equation}
with $\widehat{\mathcal{W}}_{i j l}$ are the elements of the affinity tensor, see sub-Section \ref{s41}. The objective function \eqref{equ4.4} can be written as 
\[
\begin{aligned}
    g_{r}(\widehat{\mathcal{Y}}^{(r)})&=\frac{1}{2} \sum_{i=1}^{n_1} \sum_{j=1}^{n_1} \widehat{\mathcal{W}}_{i jr}\left\|\widehat{\mathcal{Y}}_{i}^{(r)}-\widehat{\mathcal{Y}}_{j}^{(r)}\right\|^2_{F}\\
    &=\frac{1}{2} \sum_{i=1}^{n_1} \sum_{j=1}^{n_1} \widehat{\mathcal{W}}_{i jr}\left(\widehat{\mathcal{Y}}_{i}^{(r)}-\widehat{\mathcal{Y}}_{j}^{(r)}\right) \left(\widehat{\mathcal{Y}}_{i}^{(r)\top}-\widehat{\mathcal{Y}}_{j}^{(r)}\right)^{\top} \\
    &= \frac{1}{2} \sum_{i=1}^{n_1} \sum_{j=1}^{n_1} \left( \widehat{\mathcal{W}}_{ijr} \widehat{\mathcal{Y}}_{i}^{(r)} \widehat{\mathcal{Y}}_{i}^{(r)\top} - \widehat{\mathcal{W}}_{ijr} \widehat{\mathcal{Y}}_{i}^{(r)} \widehat{\mathcal{Y}}_{j}^{(r)\top} - \widehat{\mathcal{W}}_{ijr} \widehat{\mathcal{Y}}_{j}^{(r)} \widehat{\mathcal{Y}}_{i}^{(r)\top} + \widehat{\mathcal{W}}_{ijr} \widehat{\mathcal{Y}}_{j}^{(r)} \widehat{\mathcal{Y}}_{j}^{(r)\top} \right) \\
    &= \frac{1}{2} \left[ \sum_{i=1}^{n_1} \widehat{\mathcal{D}}_{iir} \widehat{\mathcal{Y}}_{i}^{(r)} \widehat{\mathcal{Y}}_{i}^{(r)\top} - 2 \sum_{i=1}^{n_1} \sum_{j=1}^{n_1} \widehat{\mathcal{W}}_{ijr} \widehat{\mathcal{Y}}_{i}^{(r)} \widehat{\mathcal{Y}}_{j}^{(r)\top} + \sum_{j=1}^{n_1} \widehat{\mathcal{D}}_{jjr} \widehat{\mathcal{Y}}_{j}^{(r)} \widehat{\mathcal{Y}}_{j}^{(r)\top} \right].
\end{aligned}
\]

We can simplify it to 
\[
g_{r}(\widehat{\mathcal{Y}}^{(r)}) = \sum_{i=1}^{n_1} \widehat{\mathcal{D}}_{iir} \widehat{\mathcal{Y}}_{i}^{(r)} \widehat{\mathcal{Y}}_{i}^{(r)\top} - \sum_{i=1}^{n_1}\sum_{j=1}^{n_1}\widehat{\mathcal{W}}_{ijr}^{\prime} \widehat{\mathcal{Y}}_{i}^{(r)} \widehat{\mathcal{Y}}_{j}^{(r)\top}.
\]
We put 
\[
G(\mathcal{Y}) = \sum_{r = 1}^{n3} g_{r}\left( \widehat{\mathcal{Y}}^{(r)} \right).
\]
Then we can rewrite the equation using a tensor notation as 
\[
\begin{aligned}
	G(\mathcal{Y}) =n_3 \operatorname{Trace}\left(\mathcal{Y}^T \star (\mathcal{D}-\mathcal{W}) \star \mathcal{Y}\right)= n_3 \operatorname{Trace}\left(\mathcal{Y}^T \star \mathcal{L} \star \mathcal{Y}\right),
\end{aligned}
\]
with $\mathcal{L} =\mathcal{D}-\mathcal{W} $ and $\mathcal{D}$ is an f-diagonal tensor with $\widehat{\mathcal{D}}_{i i r}=\displaystyle\sum_{j= 1}^{n_1} \widehat{\mathcal{W}}_{j ir}$, and $\mathcal{Y}\in \mathbb{R}^{n_1 \times d \times n_3}$ is the tensor of the coordinates for the $n_1$ points.\\
Thus, the problem \eqref{equ4.4} is equivalent to the following constrained optimization problem 
\begin{equation}\label{equ4.5}
	\underset{\mathcal{Y}}{\arg \min }\  n_3 \operatorname{Trace}\left(\mathcal{Y}^T \star \mathcal{L} \star \mathcal{Y}\right),\quad \text {subject to } \mathcal{Y}^T \star\mathcal{D} \star \mathcal{Y}=\mathcal{I}_{d}.
\end{equation}
{
We have the tensor $\mathcal{L} = \mathcal{D} - \mathcal{W} $ is f-symmetric, and the tensor $\mathcal{D}$ is a f-diagonal tensor with all the elements of the diagonal is strictly positive then the tensor $\mathcal{D}$ is a f-symmetric positive definite then using the Theorem \ref{theo1}, the optimization problem is a generalized eigentube problem that is equivalent to}
\begin{equation}\label{equ4.6}
	\mathcal{L} \star \mathcal{Y}= \frac{1}{n_3}\mathcal{D} \star \mathcal{Y} \star \Lambda.  
\end{equation}
With $\Lambda$ is a f-diagonal tensor, the solution of \eqref{equ4.5} is the $d$ eigenslices associated with the $d$ smallest non-zero eigentube of the generalized eigentube problem \eqref{equ4.6}.
Algorithm \ref{MLE} shows a summary of the Multilinear Laplacian Eigenmaps method.
\begin{algorithm}[H]
	\caption{Multilinear Laplacian Eigenmaps}
	\label{MLE}
	\begin{algorithmic}
		\STATE{\textbf{Input:} $\mathcal{X} \in \mathbb{R}^{n_1 \times n_2 \times n_3}, $ input data: third-order tensor,}
		\STATE{$d$: reduced dimension,}
		\STATE{$k$: number of neighborhoods,}

		\STATE{\textbf{Output:} $\mathcal{Y}\in \mathbb{R}^{n_1 \times d \times n_3}$,}\\
		\STATE{Compute $\widehat{\mathcal{X}}={\tt fft}(\mathcal{X},[\,], 3)$,}
		\FOR{$i=1, \ldots,n_3$}
            \STATE{ From $ \widehat{\mathcal{X}}^{(i)} $ build a k-NN graph, }
  		\STATE{$\widehat{\mathcal{D}}^{(i)} - \widehat{\mathcal{W}}^{(i)}\leftarrow$ Buildlaplacian  $\left(\widehat{\mathcal{X}}^{(i)}, k\right)$,}
		\ENDFOR
        \STATE{ ${\mathcal{D}}={\tt ifft}(\widehat{\mathcal{D}},[\,], 3)$, }
        \STATE{ ${\mathcal{W}}={\tt ifft}(\widehat{\mathcal{W}},[\,], 3)$, }
        \STATE{ $\mathcal{L} = (\mathcal{D} - \mathcal{W}) $,}
        \STATE{$\mathcal{Y} \leftarrow $ Tensor diagonalization$\left(  \mathcal{D}^{-1} \star \mathcal{L}, d\right)$,}

		\STATE{\textbf{return} $\mathcal{Y}$}.
	\end{algorithmic}
\end{algorithm}

\subsection{Locally Multilinear Embedding} This method (LME) is an unsupervised non-linear dimensionality reduction algorithm, requiring as inputs, data tensor $\mathcal{X} \in \mathbb{R}^{n_1 \times n_2 \times n_3}$, with each sample represented by a third-order tensor $\left\{\mathcal{X}_{i} \in \mathbb{R}^{1 \times n_{2} \times n_{3}}, i=1, \ldots, n_{1}\right\}$, several dimensions $d<n_2$ and integer $k$ for finding local neighborhoods. The output is a tensor $\mathcal{Y} \in \mathbb{R}^{n_1 \times d \times n_3}$.
The main idea of local Multilinear Embedding is to use the same reconstruction weights in the lower-dimensional integration space as in the higher-dimensional input space. In the following sub-sections, we will explain this.
\subsubsection{Multilinear Reconstruction by the Neighbors}
In this section, we find the weights for the Multilinear reconstruction of every point by its $k$-NN. The optimization problem for this Multilinear reconstruction in the high-dimensional input space is given by the minimization of the function
\begin{eqnarray}
	\varepsilon({\mathcal{E}^{(r)}})&:= \displaystyle\sum_{i=1}^{n_1}\left\|{\mathcal{X}}^{(r)}_{i}- \displaystyle \sum_{j=1}^k \mathcal{E}^{(r)}_{i j} {\mathcal{X}}^{(r)}_{ij}\right\|_F^2,\nonumber\\
\text { subject to }& \displaystyle \sum_{j=1}^k \mathcal{E}^{(r)}_{i j}=1, \quad i  =1, \ldots, n_1, \quad r =1, \ldots, n_3,
\end{eqnarray}
where $\mathcal{X} \in \mathbb{R}^{n_1\times n_2 \times n_3}$ ${\mathcal{E}} \in \mathbb{R}^{n_1 \times k \times n_3}$, with
${\mathcal{E}}^{(r)}_{i} = [{\mathcal{E}}^{(r)}_{ik}, \ldots, {\mathcal{E}}^{(r)}_{ik}]^{T} \in\mathbb{R}^k$ includes the weights of Multilinear reconstruction of the $i$-th data point using its $k$ neighbors in the $r$-th frontal slice, and ${\mathcal{X}}^{(r)}_{ij}\in\mathbb{R}^{n_2}$ is the $j$-th neighbor of the $i$-th data point in the $r$-th frontal slices. The constraint $\sum_{j=1}^k \mathcal{E}^{(r)}_{i j}=1$ means that the weights of linear reconstruction sum to one for every point in each frontal slice.\\
We can write the objective $\varepsilon({\mathcal{E}^{(r)}})$ as
\[
\varepsilon({\mathcal{E}^{(r)}})=\sum_{i=1}^{n_1}\left\|{\mathcal{X}}_{i}^{(r)}-{\boldsymbol{X}}^{(r)}_{i}{\mathcal{E}}^{(r)}_{i}\right\|_F^2, \quad   r = 1, \ldots, n_3,
\]
with ${\boldsymbol{X}}^{(r)}_{i}\in \mathbb{R}^{n_2 \times k}$ contain the $k$ neighbor of the $i$-th data point in the $r$-th frontal slice.
The constraint $\sum_{j=1}^k \mathcal{E}^{(r)}_{ij}=1$ implies that $\mathbf{1}^{\top} {\mathcal{E}}^{(r)}_{i}=1$, where $ \mathbf{1}:=$ $[1, \ldots, 1]^{\top} \in \mathbb{N}^k$ therefore, ${\mathcal{X}}^{(r)}_{i}={\mathcal{X}}^{(r)}_{i} \mathbf{1}^{\top} \mathcal{E}^{(r)}_{i}$.\\
We can simplify the term in $\varepsilon(\mathcal{E}^{(r)})$ as
\[
\begin{aligned}
	\left \| \mathcal{X}^{(r)}_{i} - \boldsymbol{X}_{i}^{(r)} \mathcal{E}^{(r)}_{i} \right \|_F^2 &= \left \| \mathcal{X}_{i}^{(r)} \mathbf{1}^{\top} \mathcal{E}^{(r)}_{i} - \boldsymbol{X}_{i}^{(r)} \mathcal{E}_{i}^{(r)} \right \|_F^2 \\
	&= \left\| \left( \mathcal{X}_{i}^{(r)} \mathbf{1}^{\top} - \boldsymbol{X}_{i}^{(r)} \right) \mathcal{E}^{(r)}_{i} \right\|_F^2 \\
	&=  \mathcal{E}^{(r)\top}_{i} \left( \mathcal{X}^{(r)}_{i} \mathbf{1}^{\top} - \boldsymbol{X}^{(r)}_{i} \right)^{\top} \left( \mathcal{X}^{(r)}_{i} \mathbf{1}^{\top} - \boldsymbol{X}^{(r)}_{i} \right) \mathcal{E}_{i}^{(r)} \\
	&=  \mathcal{E}^{(r)\top}_{i} \boldsymbol{G}_{ir} \mathcal{E}_{i}^{(r)},
\end{aligned}
\]
where $\boldsymbol{G}_{ir}$ is a gram matrix defined as
\[
\boldsymbol{G}_{ir}:=\left({\mathcal{X}}_{i}^{(r)} \mathbf{1}^{\top}-{\boldsymbol{X}}_{i}^{(r)}\right)^{\top}\left({\mathcal{X}}_{i}^{(r)} \mathbf{1}^{\top}-{\boldsymbol{X}}_{i}^{(r)}\right)\in \mathbb{R}^{k \times k}, \   r = 1,\ldots, n_3.
\]
Finally, we have
\[
\begin{aligned}
	&\underset{\mathcal{E}_{i}^{(r)}}{\arg\min} \quad \sum_{i=1}^{n_1} \mathcal{E}^{(r) \top}_{i} \boldsymbol{G}_{ir} \mathcal{E}_{i}^{(r)}, \quad \text{for } i = 1, \ldots, n_1, r = 1, \ldots, n_3, \\
	&\text{subject to} \quad \mathbf{1}^{\top} \mathcal{E}_{i}^{(r)} = 1.
\end{aligned}
\]
The Lagrangian of this problem can be formulated as
{
\[
\mathcal{L}\left(\mathcal{E}^{(r)}_{i},\lambda_{ir} \right) = \sum_{i=1}^{n_1} \mathcal{E}^{(r) \top}_{i} \boldsymbol{G}_{ir} \mathcal{E}^{(r)}_{i} - \sum_{i=1}^{n_1} \lambda_{ir} \left(\mathbf{1}^{\top} \mathcal{E}_{i}^{(r)} - 1\right),
\]
setting the derivative of Lagrangian to zero gives
\[
\begin{aligned}
	\frac{\partial \mathcal{L}\left(\mathcal{E}^{(r)}_{i},\lambda_{ir} \right)}{\partial \mathcal{E}_{i}^{(r)}} & =2 \boldsymbol{G}_{ir} \mathcal{E}_{i}^{(r)}-\lambda_{ir} \mathbf{1} = \mathbf{0}, \   r = 1, ..., n_3,\\
	&\Longrightarrow \mathcal{E}_{i}^{(r)}=\frac{1}{2} {\boldsymbol{G}_{ir}}^{-1} \lambda_{ir} \mathbf{1}=\frac{\lambda_{ir}}{2} \boldsymbol{G}_{ir}^{-1} \mathbf{1}.\\
	\frac{\partial \mathcal{L}\left(\mathcal{E}^{(r)}_{i},\lambda_{ir} \right)}{\partial \lambda_{ir}} & =\mathbf{1}^{\top} \mathcal{E}^{(r)}_{i}-1 = 0 \Longrightarrow \mathbf{1}^{\top} \mathcal{E}^{(r)}_{i}=1,
\end{aligned}\\
\]
then we have
\[
\frac{\lambda_{ir}}{2} \mathbf{1}^{\top} \boldsymbol{G}_{ir}^{-1} \mathbf{1}=1 \Longrightarrow \lambda_{ir}=\frac{2}{\mathbf{1}^{\top} \boldsymbol{G}_{ir}^{-1} \mathbf{1}}, \    r = 1, \ldots,n_3,
\]
therefore
\[
\mathcal{E}^{(r)}_{i}=\frac{\lambda_{ir}}{2} \boldsymbol{G}_{ir}^{-1} \mathbf{1}=\frac{\boldsymbol{G}_{ir}^{-1} \mathbf{1}}{\mathbf{1}^{\top} \boldsymbol{G}_{ir}^{-1} \mathbf{1}} ,   r = 1,\ldots,n_3, \   i=1,\ldots,n_1.
\]
}
Moreover, reader must note that the rank of the matrix $\boldsymbol{G}_{ir}$, so the rank of matrix $\boldsymbol{G}_{ir} \in \mathbb{R}^{k \times k}$ is at most equal to $\min (k, n_2)$. If $n_2<k$, then $\boldsymbol{G}_{ir}$ is singular then $\boldsymbol{G}_{ir}$ should be replaced by $\boldsymbol{G}_{ir}+\epsilon \boldsymbol{I}$ where $\epsilon$ is a small positive number. Usually, the data such as images are high dimensional (so $k \ll n_2$ ) and thus if $\boldsymbol{G}_{ir}$ is full rank, we will not have any problem with inverting it.
\subsubsection{Multilinear Embedding}
In the last sub-section, we found the weights for Multilinear reconstruction in the high dimensional input space. In this sub-section, data points are projected in the low dimensional embedding space using the same weights as in the input space. This Multilinear embedding can be formulated as
\begin{eqnarray}\label{equ4.8}
	\underset{\widehat{\mathcal{Y}}^{(r)}}{\arg\min}&\displaystyle\sum_{i=1}^{n_1} \left\| \widehat{\mathcal{Y}}_{i}^{(r)} - \displaystyle\sum_{j=1}^{n_1} \widehat{\mathcal{W}}^{(r)}_{ij} \widehat{\mathcal{Y}}^{(r)}_{j} \right\|_F^2, \quad r = 1, \ldots, n_3,\nonumber \\
\text{subject to}& \quad \dfrac{1}{n_1} \displaystyle\sum_{i=1}^{n_1} \widehat{\mathcal{Y}}_{i}^{(r)} \widehat{\mathcal{Y}}_{i}^{(r)\top} = {{I}}, \quad \displaystyle\sum_{i=1}^{n_1} \widehat{\mathcal{Y}}_{i}^{(r)} = \mathbf{0},
\end{eqnarray}
where ${I}$ is the Identity matrix, and $\widehat{\mathcal{Y}}_{i} \in \mathbb{C}^{1 \times d\times n_3}$ is the $i$-th embedded data point in the Fourier domain, and $\widehat{\mathcal{W}}^{(r)}_{i j}$ is given by
\[
\widehat{\mathcal{W}}_{ij}^{(r)}:= \begin{cases}\widehat{\mathcal{E}}^{(r)}_{i j} & \text { if } {\mathcal{X}}_{j}^{(r)}\in \text{k-NN}\left({\mathcal{X}}_{i}^{(r)}\right),\ r= 1,\ldots ,n_3, \\ 0 & \text { otherwise.}\end{cases}
\]
The second constraint in equation \eqref{equ4.8} states that the mean of the projected data points is zero. The first and second constraints together ensure that the projected points have unit covariance.
We have $ \widehat{\mathcal{W}}_{i}^{(r)}\in \mathbb{C}^n$ and let $ e_i:=$ $[0, \ldots, 1, \ldots, 0]^{\top} \in \mathbb{R}^n$ be the vector whose $i$-th element is one and other elements are zero. The objective function in equation \eqref{equ4.8} can be restated as
\[
\sum_{i=1}^{n_1}\left\|\widehat{\mathcal{Y}}_{i}^{(r)}-\sum_{j=1}^{n_1} \widehat{\mathcal{W}}_{ij}^{(r)} \widehat{\mathcal{Y}}_{j}^{(r)}\right\|_F^2 = \sum_{i=1}^{n_1}\left\|\widehat{\mathcal{Y}}^{(r)\top} \widehat{e}_i - \widehat{\mathcal{Y}}^{(r)\top} \widehat{\mathcal{W}}_{i}^{(r)}\right\|_F^2,
\]
which can be formulated as
\begin{equation}
	\begin{gathered}	\sum_{i=1}^{n_1}\left\|\widehat{\mathcal{Y}}^{(r)\top} \widehat{e}_i-\widehat{\mathcal{Y}}^{(r)\top}\widehat{\mathcal{W}}_{i}^{(r)}\right\|_F^2=\left\|\widehat{\mathcal{Y}}^{(r)\top} \widehat{\mathcal{I}}_{n_1}^{(r)}-\widehat{\mathcal{Y}}^{(r)\top} \widehat{\mathcal{W}}^{(r)\top}\right\|_F^2 \\
		=\left\|\widehat{\mathcal{Y}}^{(r)\top}(\widehat{\mathcal{I}}_{n_1}^{(r)}-\widehat{\mathcal{W}}^{(r)})^\top\right\|_F^2,
	\end{gathered}
\end{equation}

where ${\mathcal{I}}_{n_1} \in \mathbb{R}^{n_1 \times n_1 \times n_3}$ is the Identity tensor, and $\widehat{\mathcal{W}}\in \mathbb{C}^{n_1 \times n_1 \times n_3}$.\\
By using tensor notation, the objective function in \eqref{equ4.8} can be formulated as
\[
\begin{aligned}
	\left\|\mathcal{Y}^{\top} \star (\mathcal{I}_{n_1}-\mathcal{W})^{\top}\right\|_F^2 & =n_3\operatorname{Trace}\left((\mathcal{I}_{n_1}-\mathcal{W}) \star \mathcal{Y} \star \mathcal{Y}^{\top} \star (\mathcal{I}_{n_1}-\mathcal{W})^{\top}\right) \\
	& =n_3\operatorname{Trace}\left(\mathcal{Y}^{\top} \star (\mathcal{I}_{n_1}-\mathcal{W})^{\top} \star (\mathcal{I}_{n_1}-\mathcal{W}) \star \mathcal{Y}\right) \\
	& =n_3\operatorname{Trace}\left(\mathcal{Y}^{\top} \star \mathcal{M} \star \mathcal{Y}\right),
\end{aligned}
\]
where 
\[ 
\mathcal{M}=(\mathcal{I}_{n_1}-\mathcal{W})^{\top} \star (\mathcal{I}_{n_1}-\mathcal{W}) \in \mathbb{R}^{n_1 \times n_1 \times n_3}.
\]
Note that the tensor $(\mathcal{I}_{n_1}-\mathcal{W})$ is the Laplacian of tensor $\mathcal{W}$. Then the tensor $\mathcal{M}$ can be considered as the gram tensor over the Laplacian of weight tensor. The second constraint will be satisfied implicitly in the optimization problem \eqref{equ4.8} see\cite{ghojogh2020locally}.
Then the optimization problem \eqref{equ4.8} can be formulated as
\begin{equation}\label{equ4.10}
	\underset{\mathcal{Y}}{\argmin } \ n_3{\operatorname { Trace }}\left(\mathcal{Y}^{\top} \star \mathcal{M} \star \mathcal{Y}\right),
	\text { subject to } \frac{1}{n_1} \mathcal{Y}^{\top} \star \mathcal{Y}=\mathcal{I}_{d}.
\end{equation}
{
We have the tensor $\mathcal{M}$ is f-symmetric then using the Theorem \ref{theo1}, the optimisation problem \eqref{equ4.10} is a generalized eigentube problem that is equivalent to}
\begin{equation}\label{equ4.11}
	\mathcal{M} \star \mathcal{Y}=\frac{1}{n_1 n_3}\mathcal{Y} \star \Lambda.
\end{equation}
The solution of the optimization problem \eqref{equ4.10} is the $d$ eigenslices associated to the $d$ smallest non-zero eigentube of the eigentube problem \eqref{equ4.11}.
Algorithm \ref{LME} shows a summary of the LME method.
\begin{algorithm}[H]
	\caption{Locally Multilinear Embedding}
	\label{LME}
	\begin{algorithmic}
		\STATE{\textbf{Input:} $\mathcal{X} \in \mathbb{R}^{n_1 \times n_2 \times n_3}, $ input data: third-order tensor,}
		\STATE{$d$: reduced dimension,}
		\STATE{$k$: number of neighborhoods,}

		\STATE{\textbf{Output:} $\mathcal{Y}\in \mathbb{R}^{n_1 \times d \times n_3}$,}\\
		\FOR{$r=1, \ldots,n_3$}
            \STATE{ From $\mathcal{X}^{(r)}$ build a k-NN graph,}
  		\FOR{$\mathcal{X}_{i}^{(r)} \in \mathcal{X}^{(r)\top}$}
            \STATE{Compute the $k \times k$ matrix $\boldsymbol{G}_{ir}$  
            \[
            \boldsymbol{G}_{ir}:=\left({\mathcal{X}}_{i}^{(r)} \mathbf{1}^{\top}-{\boldsymbol{X}}_{i}^{(r)}\right)^{\top}\left({\mathcal{X}}_{i}^{(r)} \mathbf{1}^{\top}-{\boldsymbol{X}}_{i}^{(r)}\right)\in \mathbb{R}^{k \times k},
            \] 
             ${\boldsymbol{X}}^{(r)}_{i}\in \mathbb{R}^{n_2 \times k}$ continent the $k$ neighbor of $\mathcal{X}_{i}^{(r)}$,
            }
            \STATE{ find the weights ${\mathcal{E}}^{(r)}_{i} = [{\mathcal{E}}^{(r)}_{ik}, \ldots, {\mathcal{E}}^{(r)}_{ik}]^{T} \in\mathbb{R}^k$ by solving $$
	\underset{\mathcal{E}_{i}^{(r)}}{\arg\min} \quad \sum_{i=1}^{n_1} \mathcal{E}^{(r) \top}_{i} \boldsymbol{G}_{ir} \mathcal{E}_{i}^{(r)},\ 
	\text{subject to} \quad \mathbf{1}^{\top} \mathcal{E}_{i}^{(r)} = 1.$$}
            \STATE{ Construct the $n_1 \times n_1$ frontal slice $\widehat{\mathcal{W}}^{(r)}$ given by
\[
\widehat{\mathcal{W}}_{ij}^{(r)}:= \begin{cases}\widehat{\mathcal{E}}^{(r)}_{i j} & \text { if } {\mathcal{X}}_{j}^{(r)}\in \text{k-NN}\left({\mathcal{X}}_{i}^{(r)}\right), \\ 0 & \text { otherwise.}\end{cases}
\] }
		\ENDFOR
            \ENDFOR
            \STATE{ ${\mathcal{W}}={\tt ifft}(\widehat{\mathcal{W}},[\,], 3)$, }
        \STATE{ $ 
\mathcal{M} = (\mathcal{I}_{n_1}-\mathcal{W})^{\top} \star (\mathcal{I}_{n_1}-\mathcal{W}) \in \mathbb{R}^{n_1 \times n_1 \times n_3}.
$}
        \STATE{$\mathcal{Y} \leftarrow $ Tensor diagonalization$\left(   \mathcal{M}, d\right)$,}
		\STATE{\textbf{return} $\mathcal{Y}$}.
	\end{algorithmic}
\end{algorithm}

\section{Numerical experiments}\label{s5}
In this part of our study, we evaluate the three techniques: Multilinear Local Discriminant Embedding (t-MLDE), Multilinear Laplacian Eigenmaps (t-MLE), and Locally Multilinear Embedding (t-LME). We compare them with state-of-the-art methods \cite{dufrenois2022multilinear, lu2008mpca,yan2006multilinear}. Each method is applied ten times to four different datasets to evaluate their effectiveness in reducing dimensionality. After that, we calculate the mean time required for the dimensionality reduction process. In our approach, we use random forest algorithm for classification, after that we use cross-validation to determine the average accuracy, as suggested by Berrar see \cite{berrar2019cross}. For this, we use 80\% of each dataset for training and the remaining 20\% for testing. Regarding the selection of $k_1$, $k_2$, $t_1$, $t_2$ in MLDE, $k$, $t$ in MLaplacian Eigenmaps, and $k$ in Locally Multilinear Embedding we experimented with various values using a small subset of data. After determining the most effective values, we then applied these to the entire dataset.\\
\subsection{Data sets}
In our experiments, we used four multidimensional databases: face recognition AR, FEI, Brain Tumor MRI, and COVID-19 Chestxray. The AR and FEI datasets are specifically designed for face recognition, while the Brain Tumor MRI and COVID-19 datasets provide MRI and X-ray images, respectively, which have applications in medical imaging. In the following, we have a description of datasets.\\
\begin{itemize}
    \item \textbf{AR database}: The AR database contains 2600 images featuring frontal faces with different expressions, lighting conditions, and occlusions. where each subject has 26 facial images taken in two sessions separated by two weeks, as illustrated in Fig. Furthermore, two formats of data representations are used according to the formulations of algorithms. By 2D formulation, an image is stored as a matrix of size 115 by 115. we resize each image to 32 by 32 pixels and the dimension of the dataset is 2600×32×32×3.\\
    \item \textbf{FEI face databese}: The FEI database is a Brazilian face database that contains a set of face images taken between June 2005 and March 2006 at the Artificial Intelligence Laboratory of FEI in São Bernardo do Campo, São Paulo, Brazil. There are 14 images for each of 200 individuals, corresponding to 2800 images in total. All images are colorful and taken against a white homogenous background in an upright frontal position with profile rotation of up to about 180 degrees. Scale might vary about 10\% and the original size of each image is 640x480 pixels. we resize each image to 32 by 32 pixels and the dimension of the dataset is 2800×32×32×3.\\
    \item \textbf{Brain Tumor MRI Dataset} The Brain Tumor MRI Dataset contains 1311 images of human brain MRI images, which are classified into 4 classes glioma, meningioma, no tumor, and pituitary. The dimensions of this data are (1311, 32, 32, 3), where 1311 is the number of MRI images, 64 is the height and width of each image in pixels, and 3 represents the RGB color channels. This dataset provides a rich source of medical imaging data for research into the automatic detection and classification of brain tumors and can be used to train and evaluate machine learning algorithms in this domain. The use of MRI in the detection of brain tumors is a well-established medical imaging technique, and this dataset allows for further development and improvement of these techniques. The results of research using this dataset have the potential to improve patient outcomes and save lives.\\
    \item \textbf{COVID-19}: The COVID-19 is a collection of chest X-ray images of patients with or without COVID-19 and/or pneumonia. The dataset is classified into three classes  COVID-19, Pneumonia, and Normal, and has dimensions of (925, 32, 32, 3), with 925 images of 32x32 pixels and 3 color channels. The goal of the dataset is to provide a large and diverse set of data for research and development of machine learning algorithms for the automatic detection of COVID-19 and pneumonia on chest X-rays.
\end{itemize}
\begin{center}
\begin{figure}[!tbp]
	\centering
	\begin{minipage}[b]{0.49\textwidth}
		\includegraphics[width=0.5\textwidth]{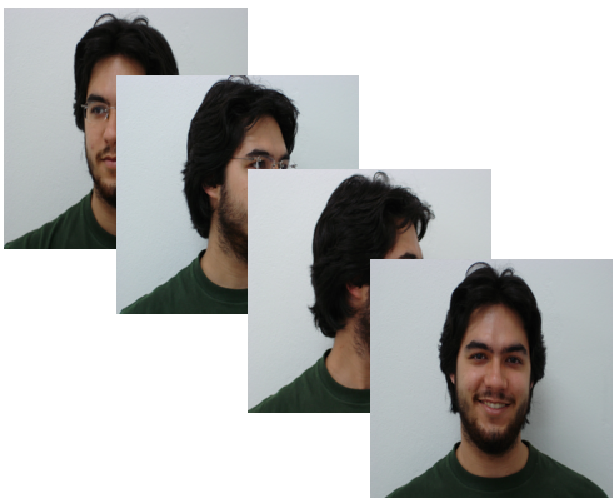}
		\caption{ FEI data set.}
	\end{minipage}
	\hfill
	\begin{minipage}[b]{0.49\textwidth}
		\includegraphics[width=0.5\textwidth]{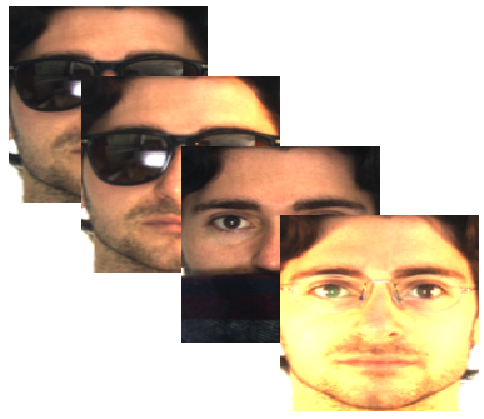}
		\caption{ AR data set.}
	\end{minipage}
\end{figure}
\begin{figure}[!tbp]
	\centering
	\begin{minipage}[b]{0.49\textwidth}
		\includegraphics[width=0.5\textwidth]{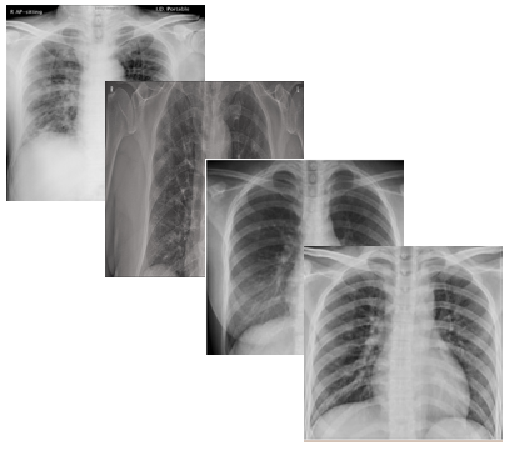}
		\caption{ Novel COVID-19 Chestxray Repository.}
	\end{minipage}
	\hfill
	\begin{minipage}[b]{0.49\textwidth}
		\includegraphics[width=0.5\textwidth]{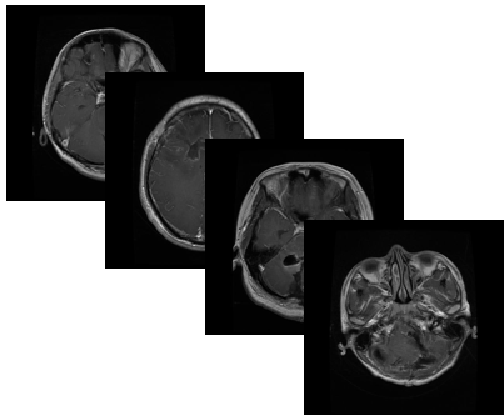}
		\caption{ Brain Tumor MRI Dataset.}
	\end{minipage}
\end{figure}
\end{center}
\subsection{Results and discussion}
        
        
        
        
        

\begin{figure}[!tbp]
	\centering
	\includegraphics[width=1\textwidth]{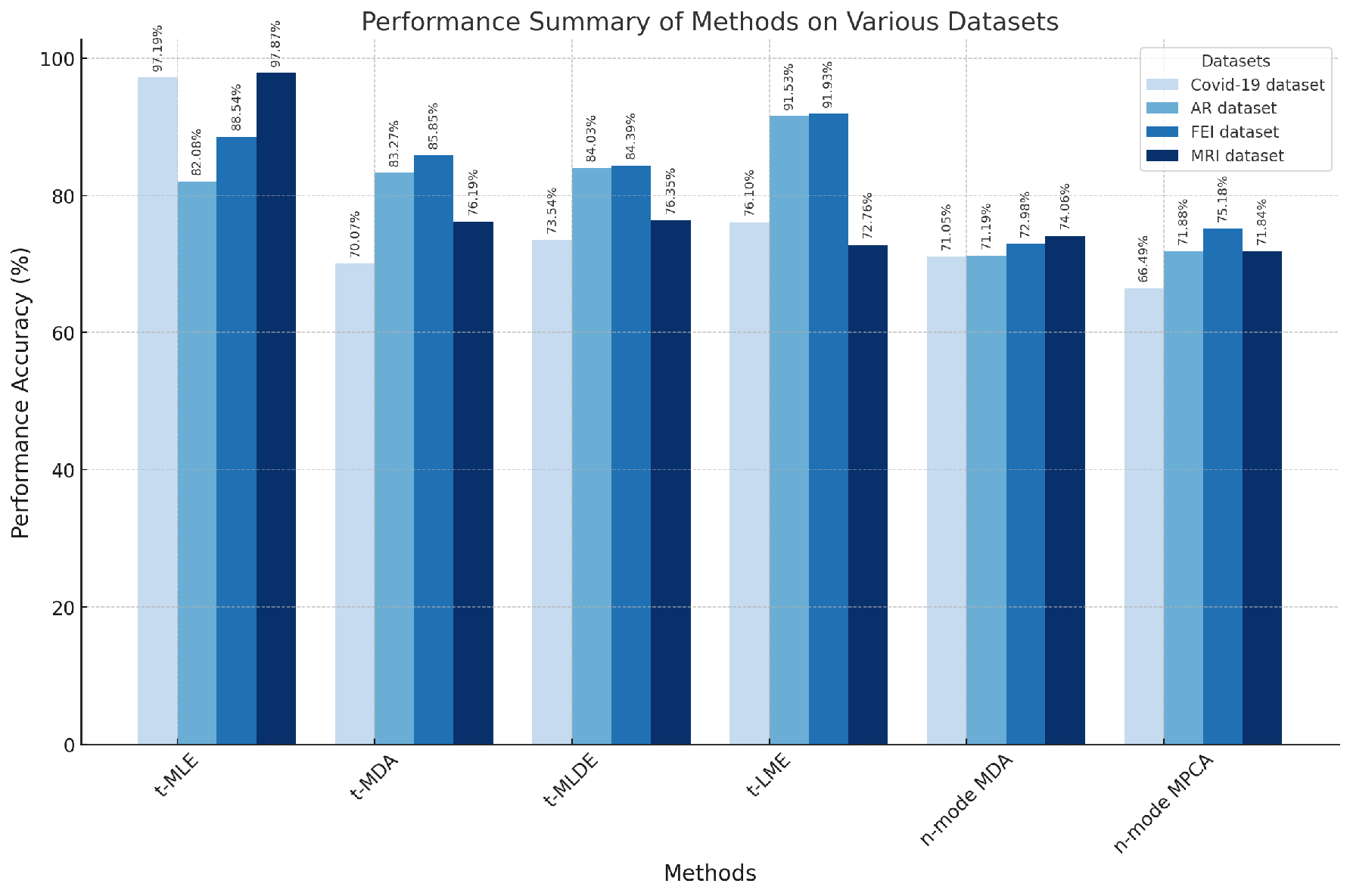}
	\caption{Summary of performance accuracy \% of methods across various datasets.}
	\label{Figure5.5}
\end{figure}
\begin{table}[H]
    \centering
    \begin{tabular}{l|llllll}
        \hline
        Methods & Covid-19 dataset & AR dataset & FEI dataset & MRI dataset \\
        \hline
        t-MLE & $1.86$ & $41.04$ & $35.5$ & $5.51$ \\
        
        t-MDA & $65.20$ & $58.57$ & $65.63$ & $58.07$ \\
        
        t-MLDE & $42.33$ & $154.92$ & $185.07$ & $54.42$ \\
        
        t-LME & $69.59$ & $330.91$ & $389.06$ & $100.10$ \\
        
        n-mode MDA & $4.53$ & $12.52$ & $16.02$ & $5.50$ \\
        
        n-mode MPCA & $13.82$ & $42.54$ & $36.92$ & $20.15$ \\
        \hline
    \end{tabular}
    \caption{Time complexity in seconds for dimensionality reduction across various datasets.}
    \label{table:2}
\end{table}

Figure \ref{Figure5.5} provide a summary of method performances on the AR, FEI, COVID-19, and Brain Tumor MRI datasets. This figure showcases the average accuracy values attained by t-MLE, t-MLDE, t-LME and t-MDA proposed in \cite{dufrenois2022multilinear}, n-mode MPCA proposed in \cite{lu2008mpca}, and n-mode MDA proposed in \cite{yan2006multilinear}.

Firstly, we can see from Figure \ref{Figure5.5} that methods using the t-product surpass those based on the n-mode product in terms of accuracy. This is clearly demonstrated by the comparison between {t-MDA} and n-mode MDA. This difference can be attributed to the computational approach required for each method. Specifically, the n-mode product-based methods necessitate first converting the tensor into a matrix (metricizing) before calculating the eigenvectors. Conversely, t-product-based methods allow for the direct computation of the solution across the entire tensor without the need to metricize the tensor first.

Secondly, also from the figure \ref{Figure5.5}, we notice that when comparing methods based on the t-product, t-MLE and t-LME perform better in terms of accuracy than t-MDA and t-MLDE. This difference in performance can be attributed to the fact that t-MDA and t-MLDE are linear, whereas t-MLE and t-LME are non-linear. Generally, non-linear methods tend to be more accurate.  

Thirdly, when comparing t-MLE with t-LME, it's observed that t-LME performs better with datasets containing a large number of classes, offering higher accuracy. For example, the AR dataset, which contains 26 classes, shows t-LME achieving an accuracy of 91.53\% compared to t-MLE's 82.08\%. Conversely, for datasets with a smaller number of classes, such as the COVID-19 dataset, which contains just 3 classes, t-MLE shows superior results. Specifically, the accuracy of t-MLE on the COVID-19 dataset is 97.19\%, whereas t-LME achieves only 76.10\% accuracy on the same dataset. Therefore, understanding the number of classes in our dataset allows us to select the method that provides the best accuracy.

Table \ref{table:2} provides an overview of the computational complexities associated with dimensionality reduction across various datasets, including FEI, AR, COVID-19, and brain tumor MRI, using different methods.

Firstly note that the t-LME method requires a significant time for dimensionality reduction, which is expected given its complexity. The process starts with calculating the weights for the multi-linear reconstruction of each data point in the high-dimensional input space. This calculation necessitates the resolution of an optimization problem. Following this, the method projects the data points into a lower-dimensional embedding space, using the previously determined weights. This projection phase also requires the solution of a tarce-ratio tensor problem.

Secondly, it is observed that the t-MLE method demonstrates remarkable speed in processing. For instance, with the COVID-19 dataset, it completes the task in merely 1.86 seconds, and for the MRI dataset, it requires only 5.51 seconds.

In summary, while both t-MLE and t-LME methods are effective for reducing the dimensionality of multidimensional data, However, t-MLE stands out as the better choice, not only for its rapid processing speed but also for its accuracy.

\section{Conclusions}\label{s6}
In this paper, we propose a generalization of trace ratio methods for multidimensional data. This generalization includes Local Discriminant Embedding, Laplacian Eigenmaps, and Locally Linear Embedding based on the concept of t-product. 
To extend these methods to tensors or multilinear data, we present certain definitions and propose theoretical findings. Additionally, we offer a Newton-QR algorithm as a solution to the trace-ratio challenge. Finally, we showcase the numerical results of these methods compared to the state-of-the-art.

\end{document}